\def\Xint#1{\mathchoice
{\XXint\displaystyle\textstyle{#1}}%
{\XXint\textstyle\scriptstyle{#1}}%
{\XXint\scriptstyle\scriptscriptstyle{#1}}%
{\XXint\scriptscriptstyle%
\scriptscriptstyle{#1}}%
\!\int}
\def\XXint#1#2#3{{\setbox0=\hbox{$#1{#2#3}{%
\int}$ }
\vcenter{\hbox{$#2#3$ }}\kern-.6\wd0}}
\def\barint{\, \Xint -} % \, corrects the \! used in the definition
\def\bariint{\barint_{} \kern-.4em \barint}
\def\bariiint{\bariint_{} \kern-.4em \barint}
\renewcommand{\iint}{\int_{}\kern-.34em \int} %\ minor space between the integrals
\renewcommand{\iiint}{\iint_{}\kern-.34em \int} %\ minor space between the integrals
\DeclareMathAlphabet{\mathcal}{OMS}{cmsy}{m}{n}
\theoremstyle{plain}
\newtheorem{theorem}{Theorem}[section]
\newtheorem{lemma}[theorem]{Lemma}
\newtheorem{corollary}[theorem]{Proposition}
\newtheorem{proposition}[theorem]{Proposition}
\theoremstyle{definition}
\newtheorem{remark}[theorem]{Remark}
\newcommand{\R}{\mathbb{R}}
\newcommand{\N}{\mathbb{N}}
\newcommand{\Z}{\mathbb{Z}}
\newcommand{\p}{\partial}
\newcommand{\la}{\langle}
\newcommand{\ra}{\rangle}
\newcommand{\les}{\lesssim}
\newcommand{\norm}[1]{\lVert #1 \rVert}
\renewcommand{\:}{\colon}
\newcommand{\wstar}{\overset{\ast}{\rightharpoonup}}
\newcommand{\into}{\hookrightarrow}
\newcommand{\uloc}{\mathrm{uloc}}
\newcommand{\loc}{{\rm loc}}
\let\div\relax
\DeclareMathOperator{\div}{div}
\let\tilde\relas
\newcommand{\tilde}[1]{\widetilde{#1}}
\DeclareMathOperator*{\esssup}{ess\,sup}
\newcommand{\BV}{{\rm BV}}
\newcommand{\TV}{{\rm TV}}
\renewcommand{\SS}{{\rm ss}}
\numberwithin{equation}{section}
\setlist[enumerate]{leftmargin=*}
\title{Long-time behavior of scalar conservation laws with critical dissipation}
\author{Dallas Albritton}
\address{Courant Institute of Mathematical Sciences, New York University, New York, NY 10012}
\email{daa399@cims.nyu.edu}
\author{Rajendra Beekie}
\address{Courant Institute of Mathematical Sciences, New York University, New York, NY 10012}
\email{beekie@cims.nyu.edu}
\begin{document}

\begin{abstract}
The critical Burgers equation $\p_t u + u \p_x u + \Lambda u = 0$ is a toy model for the competition between transport and diffusion with regard to shock formation in fluids. It is well known that smooth initial data  does not generate shocks in finite time. Less is known about the long-time behavior for `shock-like' initial data: $u_0 \to \pm a$ as $x \to \mp \infty$. We describe this long-time behavior in the general setting of multidimensional critical scalar conservation laws $\p_t u + \div f(u) + \Lambda u = 0$  when the initial data has limits at infinity. The asymptotics are given by certain self-similar solutions, whose stability we demonstrate with the optimal diffusive rates.
\end{abstract}

% Note: I accidentally overwrote the unrevised version. I guess it lives on the arxiv

\maketitle

%\tableofcontents

\section{Introduction}
\label{sec:introduction}

Our motivating example is the Burgers equation with critical non-local dissipation
\begin{equation}
  %\tag{B}
	\label{eq:criticalburgers}
	\p_t u + u \p_x u + \Lambda u = 0
\end{equation}
and `shock-like' initial data:
\begin{equation}
  \label{eq:shocklikedata}
	u_0(x) \to \pm a \text{ as } x \to \mp \infty,
\end{equation}
where $\Lambda = (-\Delta)^{1/2}$ and $a > 0$.
This equation arises as a toy model in fluid mechanics. It models the competition between the transport non-linearity $u \p_x u$, which drives the solution towards a shock, and the dissipation term $\Lambda u$, whose smoothing effects counteract the tendency of the non-linearity to form shocks. The equation is \emph{critical} in the sense that these two terms are in balance. In PDE terms, the strongest known monotone quantities, the $L^\infty$ norm and total variation, are invariant under the scaling symmetry
\begin{equation}
  \label{eq:scalingsymmetry}
  u \to u(\lambda x, \lambda t),
\end{equation}
which preserves the equation~\eqref{eq:criticalburgers}.

By now, it is well known that solutions of~\eqref{eq:criticalburgers} evolving from smooth initial data do not form shocks in finite time. \emph{What happens in infinite time?} We answer this question for the critical Burgers equation~\eqref{eq:criticalburgers} and in the more general context of scalar conservation laws with critical non-local dissipation in $\R^n$:
\begin{equation}
	\label{eq:criticalconservationlaw}
	\p_t u + \div f(u) + \Lambda u = 0,
\end{equation}
where the initial data has `limits at infinity'. The long-time behavior is described to leading order by certain \emph{self-similar solutions}, that is, solutions invariant under the scaling symmetry~\eqref{eq:scalingsymmetry}.

Let $h \in C^\infty(S^{n-1})$ and $u_0^{\SS}(x) = h(x/|x|)$.  Let $f \: \R \to \R^n$ belong to $C^\infty_\loc(\R;\R^n)$. Let $v_0 \in L^\infty(\R^n)$ with $|v_0| \to 0$ as $|x| \to +\infty$, specifically,
\begin{equation}
\label{eq:decaycond}
	\norm{v}_{L^\infty(\R^n \setminus B_R)} \to 0 \text{ as } R \to +\infty.
\end{equation}
Let $u_0 = u_0^{\SS} + v_0$ and $\norm{u_0^\SS}_{L^\infty(\R^n)}, \norm{u_0}_{L^\infty(\R^n)} \leq m$.  Let $u^{\SS}$, $u$ be the unique \emph{entropy solutions} to~\eqref{eq:criticalconservationlaw}
with initial data $u_0^{\SS}$, $u_0$, respectively. In the context of~\eqref{eq:criticalconservationlaw}, the notion of entropy solution was introduced by Alibaud in~\cite{AlibaudEntropy2007}, and we review it below. Notice that, by virtue of its uniqueness, $u^{\SS}$ must be self similar.

Here is our main theorem:

\begin{theorem}[Long-time behavior]
\label{thm:maintheorem}
The above entropy solution $u$ converges to the self-similar solution $u^\SS$ with the following (diffusive) rates:
\begin{equation} 
  \label{eq:ourdecayrates}
	\norm{u(\cdot,t)-u^{\SS}(\cdot,t)}_{L^q(\R^n)} \les_{m,n} o_{t \to +\infty}(1) t^{\frac{n}{q}-\frac{n}{p}} \norm{u_0 - u_0^\SS}_{L^p(\R^n)}
\end{equation}
for all $1 < p \leq q \leq +\infty$. If $p = 1$, then~\eqref{eq:ourdecayrates} holds with $O(1)$ instead of $o(1)$ on the RHS.
\end{theorem}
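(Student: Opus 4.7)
My plan is to work with the difference $w := u - u^\SS$, which satisfies (in the entropy sense)
\[
\p_t w + \div(f(u) - f(u^\SS)) + \Lambda w = 0.
\]
The strategy has three ingredients: (i) nonlinear $L^p$-contractions for $w$, (ii) an $L^1 \to L^q$ diffusive decay via a fractional Nash-type argument, and (iii) a density argument upgrading the resulting $O(1)$ bound to $o(1)$ when $p > 1$.

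For (i), I would adapt the Kruzhkov doubling-of-variables technique to the nonlocal setting (in Alibaud's framework), using the C\'ordoba--C\'ordoba pointwise inequality $\Lambda\eta(w) \le \eta'(w)\Lambda w$ for convex $\eta$, to derive
\[
\norm{w(t)}_{L^p(\R^n)} \le \norm{w_0}_{L^p(\R^n)}, \qquad 1 \le p \le \infty.
\]
For (ii), testing the equation against $w$ and applying C\'ordoba--C\'ordoba yields
\[
\frac{1}{2}\frac{d}{dt}\norm{w}_{L^2}^2 + \norm{\Lambda^{1/2} w}_{L^2}^2 \le -\int_{\R^n} w\,\div(f(u) - f(u^\SS))\,dx.
\]
Writing $f(u) - f(u^\SS) = B w$ with $B := \int_0^1 f'(s u + (1-s) u^\SS)\,ds \in L^\infty$ and integrating by parts, the right-hand side becomes $-\frac{1}{2}\int(\div B)w^2$. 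Since $\div B$ is a linear combination of $\nabla u$ and $\nabla u^\SS$, the self-similar bound $|\nabla u^\SS(\cdot,t)| \les 1/t$ (immediate from $u^\SS(x,t) = U(x/t)$ with $U$ Lipschitz), together with the parallel short-time smoothing for $\nabla u$ provided by the critical regularity theory, controls the drift by $C t^{-1}\norm{w}_{L^2}^2$. Combining this with the fractional Nash inequality $\norm{w}_{L^2}^{2+2/n} \les \norm{w}_{L^1}^{2/n}\norm{\Lambda^{1/2} w}_{L^2}^2$ and the $L^1$-contraction from (i), the quantity $y := \norm{w}_{L^2}^2$ satisfies a differential inequality of the form
\[
y' \le -c\,y^{1+1/n}\norm{w_0}_{L^1}^{-2/n} + C\,y/t,
\]
and the supersolution ansatz $y = A\norm{w_0}_{L^1}^2 / t^n$ (for $A$ sufficiently large) gives the sharp $\norm{w(t)}_{L^2}^2 \les t^{-n}\norm{w_0}_{L^1}^2$. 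A Moser-type iteration (or duality against the adjoint equation, which has drift $-B$ and the same dissipation) bootstraps this to $L^1 \to L^q$ for all $q$, and interpolating with (i) produces the $O(1)$ form of \eqref{eq:ourdecayrates}.

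For (iii), given $1 < p \le q$ I would decompose $w_0 = w_0^{(1)} + w_0^{(2)}$ with $w_0^{(1)} \in L^1 \cap L^p$ and $\norm{w_0^{(2)}}_{L^p} < \epsilon$, let $u^{(1)}$ be the entropy solution with data $u_0^\SS + w_0^{(1)}$, and then apply the $L^1 \to L^q$ decay from (ii) to $u^{(1)} - u^\SS$ and the (contraction-plus-decay) $L^p \to L^q$ bound to $u - u^{(1)}$ to obtain
\[
\norm{u(t) - u^\SS(t)}_{L^q} \les t^{-n(1-1/q)}\norm{w_0^{(1)}}_{L^1} + t^{-n(1/p-1/q)}\epsilon.
\]
Since $t^{-n(1-1/q)}/t^{-n(1/p-1/q)} = t^{-n(1-1/p)} \to 0$ precisely because $p > 1$, sending $t \to \infty$ and then $\epsilon \to 0$ yields the $o(1)$ improvement. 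For $p = \infty$ the assumption \eqref{eq:decaycond} that $v_0$ vanishes at infinity is exactly what makes $v_0$ approximable in $L^\infty$ by compactly supported (hence $L^1 \cap L^\infty$) functions, and the same scheme applies.

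The hard part will be step (ii): because the equation is genuinely nonlinear, the flux $\div(f(u) - f(u^\SS))$ does not annihilate against $w$, so one cannot close an energy estimate purely from the dissipation. The quantitative smoothing $|\nabla u|, |\nabla u^\SS| \les 1/t$ is essential to convert the drift into a borderline $t^{-1}$ perturbation of the Nash ODE, and obtaining this bound for $\nabla u$---as opposed to $\nabla u^\SS$, which is free from self-similarity---rests on the critical regularity theory for~\eqref{eq:criticalconservationlaw}.
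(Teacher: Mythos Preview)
Your step (iii) is essentially the paper's own density argument, and your step (ii) is a legitimate alternative to the paper's heat-kernel route for the $L^1\to L^q$ estimate (the paper instead invokes the Xie--Zhang pointwise bounds on the fundamental solution). The problem is step (i).

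Testing the continuity equation $\p_t w+\div(Bw)+\Lambda w=0$ against $|w|^{p-2}w$ does \emph{not} yield an $L^p$ contraction: after integrating by parts the flux term you are left with
\[
\frac{1}{p}\frac{d}{dt}\norm{w}_{L^p}^p+\big(\text{nonneg.\ dissipation}\big)=-\frac{p-1}{p}\int_{\R^n}|w|^p\,\div B\,dx,
\]
and the right-hand side is only controlled by $\norm{\div B}_{L^\infty}\sim 1/t$, which is nonintegrable at $t=0$. Integrating gives $\norm{w(t)}_{L^p}^p\le\norm{w(t_0)}_{L^p}^p\,(t/t_0)^{C(p-1)}$, i.e.\ polynomial \emph{growth}, not a bound. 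Kruzhkov doubling only produces the $L^1$ inequality; C\'ordoba--C\'ordoba handles the dissipation, not the flux. In your Nash ODE of step (ii) the superlinear term $-c\,y^{1+1/n}M^{-2/n}$ beats $Cy/t$, but for the bare $L^p$ estimate there is no such competing term. Without (i) you cannot interpolate to $L^p\to L^q$, and step (iii) then has nothing to apply to $u-u^{(1)}$.

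This is exactly the obstruction the paper isolates: at $t=0$ the coefficient $B$ is merely bounded (since $u_0^{\SS}$ is discontinuous), so energy methods see an unavoidable $1/t$ loss. The paper's remedy is to abandon $L^p$ energy estimates near $t=0$ and instead propagate the Wiener amalgam norm $\norm{v}_{\ell^p_k L^1_x}$ using Alibaud's finite-speed formula~\eqref{eq:alibaudformula}, which requires only $B\in L^\infty$ and no control on $\div B$; only after $t\ge 1/2$, once $B$ has smoothed, does one switch to kernel/energy arguments.
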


When $f \equiv 0$,~\eqref{eq:criticalconservationlaw} reduces to the fractional heat equation, and the above diffusive rates are easily seen to be sharp.

In dimension $n=1$, we also have stability in $\BV$:
\begin{theorem}[$\BV$ convergence]
\label{thm:bvconvergence}
If also  $u_0 \in \BV(\R)$, then
\begin{equation}
  \norm{u(\cdot,t) - u^{\SS}(\cdot,t)}_{\TV(\R^n)} \to 0 \text{ as } t \to +\infty.
\end{equation}
\end{theorem}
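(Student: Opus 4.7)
The plan is to combine three ingredients: the $L^\infty$-decay $\norm{u(\cdot,t) - u^\SS(\cdot,t)}_{L^\infty} = o_{t\to\infty}(1)$, which is Theorem~\ref{thm:maintheorem} applied with $p = q = \infty$ (using only $v_0 \to 0$ at infinity); the uniform bound $\TV(u(\cdot,t)) \leq \TV(u_0)$ together with $\TV(u^\SS(\cdot,t)) \leq \TV(u_0^\SS)$, a standard consequence of the $L^1$-contraction of Alibaud entropy solutions applied to spatial translates; and the smoothing of the critical dissipation, which makes solutions regular for $t > 0$.

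Set $w := u - u^\SS$. The first two ingredients provide a uniform $\TV(w(\cdot,t)) \leq C$ together with $\norm{w(\cdot,t)}_{L^\infty} \to 0$, and one needs to upgrade this to $\TV(w(\cdot,t)) \to 0$. I would proceed by a density argument. Choose a smooth cutoff $\phi_R$ equal to $1$ on $[-R,R]$ and supported in $[-2R,2R]$, and set $v_0^R := \phi_R v_0$. Since $v_0 \to 0$ at infinity and $\TV(v_0) < \infty$, one checks that $\norm{v_0 - v_0^R}_{L^\infty} + \TV(v_0 - v_0^R) \to 0$ as $R \to \infty$, while $v_0^R \in L^1 \cap \BV$ for each $R$. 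Letting $u^R$ be the entropy solution starting from $u_0^\SS + v_0^R$ and writing $w^R := u^R - u^\SS$, Theorem~\ref{thm:maintheorem} applied with $p = 1$ gives the fast decay $\norm{w^R(\cdot,t)}_{L^\infty} \les t^{-1} \norm{v_0^R}_{L^1}$. Combined with the smoothing of the critical dissipation, which provides quantitative control on $\partial_x w^R(\cdot,t)$, one argues that $\partial_x w^R(\cdot,t) \to 0$ in the total variation norm of measures, i.e.\ $\TV(w^R(\cdot,t)) \to 0$. Sending $R \to \infty$ along a diagonal, controlled via $\TV(v_0 - v_0^R) \to 0$ and a suitable $\BV$-stability for the difference $u - u^R$, closes the argument.

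The main obstacle is precisely the step $\TV(w^R(\cdot,t)) \to 0$: $L^\infty$-decay together with a uniform $\BV$-bound does \emph{not} force $\TV$-convergence in general (oscillatory sequences are the standard counterexample), and since $v_0 \notin L^1$ in general one cannot appeal directly to $L^1$-contraction to control $\TV$ of differences of entropy solutions. A natural way to carry this out is to rescale to self-similar variables $(\xi,\tau) := (x/t, \log t)$, in which $\TV$ is scale-invariant (a one-dimensional feature), $u^\SS$ becomes the stationary profile $U(\xi)$, and the rescaled $\tilde w := \tilde u - U$ is uniformly $\BV$ and converges to $0$ in $L^\infty_\xi$ by Theorem~\ref{thm:maintheorem}. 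Helly's theorem then provides subsequential $L^1_\loc$-convergence, uniqueness of the self-similar profile $U$ identifies the limit as $0$, and the smoothing of the rescaled equation precludes concentration of $\partial_\xi \tilde w$, allowing a Vitali-type argument to upgrade convergence in measure to $L^1$-convergence of $\partial_\xi \tilde w$---which is $\TV$-convergence.
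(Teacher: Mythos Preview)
Your second strategy---passing to self-similar variables and exploiting the scale-invariance of $\TV$---is the right one and is exactly what the paper does. But the execution has a real gap: after rescaling, you need $\partial_\xi \tilde w^{(k)} \to 0$ in $L^1(\R)$, and for this you must control mass at infinity, i.e.\ show that $\int_{|\xi|>R} |\partial_\xi \tilde u^{(k)}|\,d\xi$ is small uniformly in $k$ for $R$ large. Your Vitali-type argument addresses only \emph{concentration} (uniform integrability on small sets, which is indeed ruled out by smoothing), not \emph{tightness}; on the unbounded line both are needed, and tightness is the entire difficulty here. The paper obtains it from a localized version of the $\BV$ estimate (Corollary~\ref{cor:controlledbv}), which says that
\[
\int_{|\xi|>R} |\partial_\xi u^{(k)}(\xi,1)|\,d\xi \;\lesssim\; \int_{|\xi|>R-L} P(\cdot,1)\ast |\partial_\xi u_0^{(k)}|\,d\xi,
\]
and the right-hand side is $o_{R\to\infty}(1)$ uniformly in $k$ because the rescaled initial derivatives $\partial_\xi u_0^{(k)}$ are uniformly tight (they converge weakly-$\ast$ to the Dirac mass $\partial_\xi u_0^\SS$). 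Nothing in your sketch supplies this ingredient.

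Two smaller points. First, the Helly step is superfluous: once you have $\|\tilde u^{(k)}(\cdot,1)-U\|_{L^\infty}\to 0$ from Theorem~\ref{thm:maintheorem} together with uniform $C^{2,\alpha}$ bounds from Proposition~\ref{pro:regularity}, a direct interpolation gives $\|\partial_\xi \tilde u^{(k)}(\cdot,1)-\partial_\xi U\|_{L^\infty(B_R)}\to 0$ for each fixed $R$; no compactness is needed. Second, in your first approach the ``suitable $\BV$-stability for the difference $u-u^R$'' is not a known property: the $L^1$-contraction gives $\TV(u(\cdot,t))\leq \TV(u_0)$ via translates, but it does \emph{not} give $\TV\bigl((u-u^R)(\cdot,t)\bigr)\leq \TV(v_0-v_0^R)$, since $\partial_x(u-u^R)$ does not solve a scalar conservation law. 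That route appears to be a dead end.
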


Additionally,  $u^{\SS}$ is monotone and satisfies the following spatial asymptotics:
\begin{equation}
  \label{eq:spatialasymptotics}
  C^{-1} \la x \ra^{-1} \leq |u^\SS(x,1) - u^\SS_0| \leq C \la x \ra^{-1}, \quad |x| \geq C,
\end{equation}
provided that $u^{\SS}_0$ is not identically constant.\footnote{It may be possible to obtain more precise spatial asymptotics for $u^{\SS}$ and its derivatives by analyzing the similarity profile $u^{\SS}(\cdot,1)$, which satisfies a quasilinear nonlocal elliptic equation.

\emph{Added in print}: In the rarefaction case, qualitative properties of the self-similar profile (symmetry, monotonicity, and convexity of $u^{\SS}(\cdot,1)$, as well as asymptotics for $\partial_x u^\SS(x,1)$ as $|x| \to +\infty)$ were studied in Theorem~1.7 of~\cite{AsymptoticPropertiesEntropySolutions} from this perspective.} Notice that, when $n \geq 2$, $u^\SS_0$ does not generally belong to $\BV(\R^n)$, and the total variation is no longer scaling invariant.

\subsection*{Comparison with existing literature}

The critical Burgers equation~\eqref{eq:criticalburgers} belongs to the following family of Burgers equations with fractional diffusion:
\begin{equation}
  \label{eq:generalburgers}
  \p_t u + u \p_x u + \Lambda^s u = 0,
\end{equation}
where $s \in (0,2]$. These models were considered by Biler et al. in~\cite{BilerFractalBurgers1998}, where they are known as \emph{fractal Burgers equations}. One may consider also the analogous conservation laws with fractional diffusion $\Lambda^s$. The relevant literature is fairly extensive: %Other papers? Dong?

\textit{Regularity theory}.
A detailed picture of the regularity theory of~\eqref{eq:generalburgers} was shown by Kiselev, Nazarov, and Shterenberg in~\cite{KiselevFractalBurgers2008} in the periodic setting. When $s \geq 1$, smooth initial data gives global smooth solutions, whereas when $s < 1$, solutions may develop shocks in finite time. In that case, solutions may be continued uniquely within the class of entropy solutions. See~\cite{DongDuLiFiniteTimeFractal,AlibaudOccurrenceNonAppearance} for further discussions on regularity vs. blow-up. The proof of global regularity in~\cite{KiselevFractalBurgers2008} in the critical case follows the  method of `moduli of continuity'. This method was introduced in~\cite{KiselevNazarovVolbergInventiones2007} by Kiselev, Nazarov, and Volberg in the context of the critical SQG equation:\footnote{This method has since been generalized to other models, including the 1d critical Keller-Segel equations~\cite{BurczakKellerSegel} and the 1d fractional Euler alignment system~\cite{Changhui}.}%, and the 2d Muskat problem~\cite{Cameron}.}
\begin{equation}
  \label{eq:SQG}
  \tag{SQG}
  \p_t \theta + \vec{R}^\perp \theta \cdot \nabla \theta + \Lambda \theta = 0.
\end{equation}
 Other proofs of the regularity of~\eqref{eq:SQG} are contained in~\cite{CaffVassAnnals2010} (De Giorgi's method), \cite{VariationsThemeCaffVass2009}, \cite{MaekawaMiura-drift} (Nash's method), \cite{ConstVicolGAFA} (`nonlinear maximum principle'), and~\cite{ConstTarfVicolCMP}. The above proofs can be categorized as proofs of \emph{smoothing}~\cite{CaffVassAnnals2010,MaekawaMiura-drift} or \emph{propagation of regularity}~\cite{KiselevNazarovVolbergInventiones2007,VariationsThemeCaffVass2009,ConstVicolGAFA,ConstTarfVicolCMP}. The smoothing proofs notably `forget' that the equation is nonlinear. Alternative proofs of regularity for~\eqref{eq:criticalburgers}, based on smoothing, were given in~\cite{ChanCzubakBurgers2010} (De Giorgi's method) and~\cite{SilvestreHamiltonJacobi2011,SilvestreHolderEstimatesAdvectionDiffusion2012} (non-divergence form techniques). We rely on these smoothing estimates, particularly those of Silvestre, in an essential way below.\footnote{For supercritical SQG, global regularity remains open, though it is possible to show eventual regularity~\cite{SilvestreEventual,DabkowskiEventualReg,KiselevNonlocalMaximumPrinciples,ChanCzubakSilvestreEventualRegularity2010}. We mention also the recent extension of~\cite{CaffVassAnnals2010} to bounded domains in~\cite{Stokols}.}

%, , , nonlinear lower bounds on the fractional Laplacian 

\textit{Long-time behavior}.
The long-time behavior of~\eqref{eq:generalburgers} is perhaps less well studied than its regularity. When $s \in (0,2)$ and the initial data is well localized, the non-linearity of~\eqref{eq:generalburgers} is `irrelevant', in the sense of~\cite{BricmontKupiainenLin}, for the long-time dynamics. When $s=1$, Iwabuchi~\cite{Iwabuchi1,Iwabuchi2} demonstrated that all solutions with $u_0 \in L^1 \cap \dot B^{0}_{\infty,1}$ converge to the Poisson kernel. When $s=2$, the spaces $L^1$ and $\mathcal{M}$ (finite measures) are critical, and it is classical that the long-time behavior is given by a self-similar solution, sometimes called a \emph{diffusion wave}. This case and its precise asymptotic behavior can be illuminated by the Cole-Hopf transformation~\cite{ChernLiu,Miller,DiffusiveNWaves,BeckWayne}.

What about non-decaying solutions? The current best results in this direction concern `rarefaction-like' initial data, that is, $a < 0$ in~\eqref{eq:shocklikedata}. In~\cite{KarchConvergenceRarefactionWaves}, it was shown that such solutions converge to an inviscid rarefaction wave when $s > 1$. In~\cite{AsymptoticPropertiesEntropySolutions}, it was shown that when $s = 1$, the solutions converge to a certain self-similar solution, and when $s < 1$, the non-linearity is `irrelevant' in the long-time asymptotic expansion. Notably, in the rarefaction case, the potential term in the energy estimates for the linearized equation appears with a good sign.

In this paper, we analyze the case of `shock-like' initial data, which is less clear. Initially, one might wonder whether (i) solutions converge to a smooth traveling or standing wave, known as a `viscous shock', or perhaps (ii) solutions form a shock in infinite time. Regarding (i), it was already shown in~\cite{BilerFractalBurgers1998} that traveling wave solutions satisfying reasonable regularity conditions do not exist when $s \in (0,1]$. Regarding (ii), one might additionally wonder whether the standing waves constructed in the subcritical case $s>1$ in~\cite{ChmajTravelling} converge to a shock as $s \to 1^+$. This is apparently also not the case, as we show in Theorem~\ref{thm:maintheorem}.  %, however, as we can see from the linear estimates of Silvestre in~\cite{SilvestreHolderEstimatesAdvectionDiffusion2012}, which show that $[u(\cdot,t)]_{C^\alpha(\R)} \to 0$ as $t \to +\infty$. 

It is tempting to conjecture that, in the subcritical case $s > 1$, shock-like solutions of~\eqref{eq:generalburgers} behave as in the classical case $s=2$, where there is a unique viscous shock, whose global asymptotic stability was shown by Il{\'i}n and Ole{\u i}nik in~\cite{Oleinik}.  See~\cite{Sattinger,Kapitula,ZumbrunHoward} and many others for further developments and precise asymptotics. For $s \in (1,2)$, the uniqueness, spatial asymptotics, and global asymptotic stability of the monotone viscous shocks constructed in~\cite{ChmajTravelling} do not seem to have appeared in the literature, although local asymptotic stability was recently demonstrated in~\cite{AsymptoticTraveling}. %\footnote{It is also interesting to examine the convergence of viscous solutions to shocks as $\nu \to 0^+$. In the case $s > 1$, this was done in~\cite{akopian2019inviscid}.}

 %Finally, we mention that~\eqref{eq:criticalburgers} is  the critical Keller-Segel model considered on the torus in. The precise description of the long-time behavior (an infinite time blow-up for large data?) of the model in their paper remains open.

\textit{Self-similarity and (non-)uniqueness}.
The two-dimensional Navier-Stokes equations
\begin{equation}
  \label{eq:2dNS}
  \tag{NS}
  \p_t \omega + u \cdot \nabla \omega - \Delta \omega = 0, \quad u = \nabla^\perp \Delta^{-1} \omega
\end{equation}
exhibit a family of self-similar solutions known as the \emph{Oseen vortices}: $\omega(x,t) = \alpha \Gamma(x,t/\nu)$, where $\Gamma$ is the heat kernel and $\alpha = \int \omega_0 \, dx$ is the circulation. In~\cite{GallayWayne}, Gallay and Wayne famously showed that all localized solutions converge to Oseen vortices as $t \to +\infty$, and, moreover, the vortex solutions are unique within a natural solution class. Our situation is analogous, with the circulation $\alpha$ corresponding to the jump parameter $a$ in~\eqref{eq:shocklikedata}. By contrast, self-similar solutions of the three-dimensional Navier-Stokes equations are expected to be non-unique~\cite{JiaSverakIllposed,guillod2017numerical}. For~\eqref{eq:SQG}, this is investigated in forthcoming work of Bradshaw and the first author. While the entropy solutions of~\eqref{eq:criticalburgers} are unique, there may be a different class of self-similar solutions with potential non-uniqueness, for example, with $u_0 \sim \log x$, so that $\nabla u_0$ is $-1$-homogeneous.

\subsection*{Main idea}

Our starting point is the existence and uniqueness of $L^{\infty}$ entropy solutions to~\eqref{eq:criticalconservationlaw}, due to Alibaud~\cite{AlibaudEntropy2007}, which immediately gives the existence and uniqueness of a self-similar solution $u^{\SS}$. Let $v = u - u^{\SS}$ be the difference between an entropy solution $u$ and the self-similar solution. Consider a sequence $(v^{(k)})_{k \in \N}$ obtained by `zooming out' on $v$ using the scaling symmetry~\eqref{eq:scalingsymmetry}. Then establishing $v(\cdot,t) \to 0$ as $t \to +\infty$ is the same as establishing $v^{(k)} \to 0$ on $\R^n \times (1/2,1)$ as $k \to +\infty$. To analyze the new problem, we exploit a key (standard) observation about viscous scalar conservation laws, namely, that $v$ satisfies the \emph{viscous continuity equation}
\begin{equation}
  \label{eq:viscouscontinuityeq}
  \p_t v + \div(gv) + \Lambda v = 0,
\end{equation}
where
\begin{equation}
  g = \frac{f(u) - f(u^{\SS})}{u - u^{\SS}} \in L^\infty(\R^n \times (0,+\infty)).
\end{equation}
In our setting, \emph{the main difficulty is that at the initial time, $g$ is no better than bounded, since $u^{\SS}_0$ is not continuous.} This is an essential feature of the problem, and we handle it using two tools: %, since the initial data $u^{(k)}_0$ of the rescaled solutions $u^{(k)}$ converges to $u^{\SS}_0$. 
\begin{enumerate}
\item \emph{smoothing for drift-diffusion equations}. By the known regularity theory, the solution, which is initially merely bounded, instantaneously becomes $C^\alpha$-in-$x$. This may be bootstrapped to higher regularity. The key point is then to move the problem past $t=0$, which is done by the
\item \emph{controlled speed of propagation}. Solutions of~\eqref{eq:viscouscontinuityeq} have finite propagation speed \emph{up to the effect of the diffusion}. This allows us to keep the initial spatial decay of the solution for small positive times and exploit~\eqref{eq:viscouscontinuityeq} with smooth coefficients and smooth, decaying initial data.
\end{enumerate}
The above tools, due to~\cite{SilvestreHamiltonJacobi2011,SilvestreHolderEstimatesAdvectionDiffusion2012} and~\cite{AlibaudEntropy2007}, respectively, are key to our arguments. 
We encounter a further, technical difficulty in that the controlled speed of propagation only allows us to propagate $L^1$-based quantities. This requires the use of special norms $\norm{\cdot}_{\ell^q_k L^p_x(\R^n)}$, for example,
\begin{equation}
  \norm{v}_{\ell^\infty_k L^1_x(\R^n)} = \sup_{k \in \Z^n} \int_{k+(-1/2,1/2)^n}  | v(x) |  \, dx.
\end{equation} After the initial time, we use the smoothing effect to estimate more standard quantities, such as $\norm{v}_{L^q(\R^n)}$, in terms of these special norms.\footnote{Similar norms appear in the Navier-Stokes literature. See~\cite{bradshaw2020local} and the references therein. Apparently, these spaces are known as \emph{Wiener amalgam spaces}.} For this, we use pointwise estimates for fundamental solutions of non-local parabolic equations with subcritical lower order terms, due to Xie and Zhang~\cite{XieZhangHeatKernel2014}. When $f$ is merely Lipschitz, we offer less precise asymptotics, see Remark~\ref{rmk:lipschitzf}.\footnote{\emph{Added in print}: See Remark~\ref{rmk:alternative} for an alternative proof, due to Hongjie Dong, based on a maximal function estimate.}

%When $f$ is merely Lipschitz, it is still possible to show that $u$ converges to $u^{\SS}$ in a suitable sense, namely, in $L^\infty_\loc(\R^n)$ in self-similar coordinates. We comment on this further .% Note that, in higher dimensions, self-similar initial data does not generally belong to $\BV$.

%Section~\ref{sec:prelim} is devoted to preliminaries. In Section~\ref{sec:proofs}, we prove the main results stated in the introduction. 

%Old stuff:

%Once we have uniqueness, it is not surprising that the unique self-similar solution governs the long-time asymptotics. Indeed, this must be the case. One may argue by `zooming out' (or looking in self-similar variables) that this must be the case. However, due to the compactness, one only gets the convergence locally...

%Therefore, the main difficulty of the proof is to propagate the localization forward in time. The $L^p$ norms of the difference between two solutions are not expected to be conserved, in general, expect for the $L^1$ norm. However, this may be combined with the almost finite speed of propagation.

%Unclear what happens at $f$ Lipschitz regularity -- still have $C^\alpha$ smoothness, so still have convergence locally in $C^\alpha$ in self-similar variables, but unclear what happens otherwise.

\section{Preliminaries}
\label{sec:prelim}

In the sequel, constants in the $C$, $\les$ notation may implicitly depend on $n \geq 1$, $f \in W^{1,\infty}_\loc(\R;\R^n)$.

Recall that the Poisson kernel $P$ is given by
\begin{equation}
  P ( x ,  t  ) =  c_n \frac{t}{(|x|^2 + t^2 )^{\frac{n+1}{2}}},
\end{equation}
where $c_n > 0$ is chosen to satisfy $\int P(x,t) \, dx = 1$ for all $t > 0$.

In~\cite[Definition 2.3]{AlibaudEntropy2007}, Alibaud introduced the notion of \emph{entropy solution} to the critical scalar conservation law~\eqref{eq:criticalconservationlaw}. We summarize only the facts we need about entropy solutions. See Section~3 of~\cite{AlibaudEntropy2007}. For each $u_0 \in L^\infty(\R^n)$, there exists a unique entropy solution $u$ of~\eqref{eq:criticalconservationlaw}. This solution exists globally and satisfies the maximum principle
\begin{equation}
  \norm{u}_{L^\infty_{t,x}(\R^n \times (0,+\infty))} \leq \norm{u_0}_{L^\infty(\R^n)}.
\end{equation}
The PDE~\eqref{eq:criticalconservationlaw} is satisfied in the distributional sense. Finally, $u$ belongs to $C([0,T];L^1(K))$ for each $T > 0$ and compact $K \subset \R^n$.

The following proposition is contained in \cite[Theorem 3.2]{AlibaudEntropy2007}:

\begin{proposition}[Controlled speed of propagation]
\label{prop:Alibaud:formula}
Let $u_0, \tilde{u}_0 \in L^{\infty}(\mathbb{R}^n)$. Consider $u,\tilde{u}$ entropy solutions to~\eqref{eq:criticalconservationlaw} with initial conditions $u_0$ and $\tilde{u}_0$, respectively. Then for all $x_0 \in \mathbb{R}^n$, all $t > 0$ and all $R > 0$, 
\begin{equation}
\label{eq:alibaudformula}
\int_{B(x_0, R)} | u(x,t) - \tilde{u}(x,t )| \, dx\leq \int_{B(x_0, R + Lt)} P (\cdot,t ) \ast |u_0 - \tilde{u}_0|    \, dx
\end{equation}
where $L$ is the Lipschitz constant of $f$ on $[-m, m]$, with $m = \max ( \| u_0 \|_{L^{\infty}(\mathbb{R}^n)} , \| \tilde{u}_0 \|_{L^{\infty}(\mathbb{R}^n)} 
)$. %, and \red{$P$ is the Poisson kernel/half heat kernel}% and $B(x, r)$ denotes the open ball in $\mathbb{R}^n$ of center $x$ and radius $r$. 
\end{proposition}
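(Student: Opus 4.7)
The strategy is to combine a Kato-type $L^1$ differential inequality for $|u - \tilde u|$ with a test weight built as a Poisson-semigroup convolution against the characteristic of a radially expanding ball. The first ingredient is, in $\cD'(\R^n \times (0,+\I))$,
\begin{equation}
\label{eq:plan-kato}
\p_s |u - \tilde u| + \div\!\bigl[\sgn(u - \tilde u)(f(u) - f(\tilde u))\bigr] + \Lambda |u - \tilde u| \;\leq\; 0,
\end{equation}
which I would derive by Kruzhkov doubling of variables applied to Alibaud's entropy formulation. The nonlocal dissipation contributes the $\Lambda |u - \tilde u|$ term via the fractional convexity inequality $\Lambda |v| \leq \sgn(v)\, \Lambda v$, applied after mollifying the entropy $|\cdot|$ and symmetrizing the singular kernel across the doubled variable.

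For fixed $t > 0$, $x_0 \in \R^n$, and $R > 0$, pick $\chi \in C^\I(\R)$ nondecreasing with $\chi \equiv 0$ on $(-\I,0]$ and $\chi \equiv 1$ on $[1,+\I)$, set $\chi_\e(y;r) := \chi\bigl(\e^{-1}(r - |y|)\bigr)$, and define the weight
\[
\phi_\e(x,s) \;:=\; \bigl[\, P(\cdot,\,t-s) \ast \chi_\e\bigl(\cdot - x_0;\, R + L(t-s)\bigr)\,\bigr](x).
\]
With $\tau := t - s$, the identity $\p_\tau P = -\Lambda P$ for the Poisson semigroup, together with $|\nabla_y \chi_\e(y;r)| = \p_r \chi_\e(y;r)$ (since $\chi_\e$ depends on $y$ only through $|y|$ and $\chi' \geq 0$), yields
\[
\p_s \phi_\e \;=\; \Lambda \phi_\e \;-\; L\,\bigl[\,P(\cdot,\tau) \ast |\nabla_y \chi_\e|\,\bigr],
\]
while transferring the spatial derivative onto $\chi_\e$ inside the convolution and estimating in absolute value gives $|\nabla_x \phi_\e| \leq P(\cdot,\tau) \ast |\nabla_y \chi_\e|$. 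Hence $\phi_\e \geq 0$ is a pointwise supersolution of the dual dissipative Hamilton--Jacobi inequality
\[
\p_s \phi_\e \,+\, L |\nabla \phi_\e| \,-\, \Lambda \phi_\e \;\leq\; 0.
\]

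Multiplying~\eqref{eq:plan-kato} by $\phi_\e$, integrating over $\R^n \times (0,t)$, integrating by parts in $s$ and $x$, using the self-adjointness of $\Lambda$, and bounding the flux via $|f(u) - f(\tilde u)| \leq L |u - \tilde u|$ on $[-m,m]$ collapses everything to the nonpositive term $\int_0^t\!\!\int |u - \tilde u|\,[\p_s \phi_\e + L|\nabla \phi_\e| - \Lambda \phi_\e]$, and yields
\[
\int_{\R^n} \phi_\e(x,t)\,|u - \tilde u|(x,t)\, dx \;\leq\; \int_{\R^n} \phi_\e(x,0)\,|u_0 - \tilde u_0|(x)\, dx.
\]
At the endpoints, $\phi_\e(\cdot,t) = \chi_\e(\cdot - x_0;\,R)$ while $\phi_\e(\cdot,0) = P(\cdot,t) \ast \chi_\e(\cdot - x_0;\,R + Lt)$. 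Sending $\e \to 0^+$ by dominated convergence (using $\norm{u}_{L^\I},\norm{\tilde u}_{L^\I} \leq m$ and the continuity $u \in C([0,T];L^1_\loc)$) and exploiting the evenness $P(-x,t) = P(x,t)$ to transfer the convolution from $\chi_\e$ onto $|u_0 - \tilde u_0|$ on the right-hand side yields exactly~\eqref{eq:alibaudformula}.

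The main obstacle is the derivation of~\eqref{eq:plan-kato}: for the nonlocal operator $\Lambda$ one cannot naively multiply by $\sgn(u - \tilde u)$, and must instead work at the level of Alibaud's entropy inequality, whose splitting of $\Lambda$ into a near-origin singular part and a far regular part (depending on a truncation parameter $r$) makes the doubling of variables tractable. After doubling, the singular cross terms symmetrize and, in the limit $r \to 0^+$, produce the $\Lambda |u - \tilde u|$ contribution above. Once~\eqref{eq:plan-kato} is in hand, the Poisson-convolved weight $\phi_\e$, the supersolution check, and the $\e \to 0^+$ limit are routine.
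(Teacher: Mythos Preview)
The paper does not supply its own proof of this proposition; it simply cites \cite[Theorem 3.2]{AlibaudEntropy2007}. Your proposal is a faithful reconstruction of Alibaud's argument there: the Kato inequality~\eqref{eq:plan-kato} is exactly what Alibaud obtains from the doubling-of-variables procedure adapted to his split entropy formulation, and the subsequent duality test against a Poisson-evolved characteristic of a ball expanding at speed $L$ is precisely his device for extracting the localized estimate. The supersolution computation for $\phi_\e$ and the endpoint identifications are correct.

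Two small technical points worth flagging. First, $\phi_\e(\cdot,s)$ is not compactly supported in $x$ for $s<t$ (it inherits the Poisson tail $\la x\ra^{-n-1}$), so when you pair the distributional inequality~\eqref{eq:plan-kato} with $\phi_\e$ you should insert a spatial cutoff $\chi_R$ and send $R\to+\I$; the tails vanish because $|u-\tilde u|\in L^\I$ and $\phi_\e,\nabla\phi_\e,\Lambda\phi_\e$ are integrable uniformly on compact time intervals. Second, at $s\to t^-$ the convolution kernel $P(\cdot,t-s)$ degenerates to $\delta$, so the boundary term at $s=t$ must be recovered by stopping at $s=t-\de$ and using the $C([0,T];L^1_\loc)$ continuity of entropy solutions together with the approximate-identity property of $P$. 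Both are routine and Alibaud handles them; your proof sketch is correct.
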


We use Proposition~\ref{prop:Alibaud:formula} to establish

\begin{corollary}[Controlled $\BV$]
\label{cor:controlledbv}
If $u_0 \in \BV(\R^n)$, then $u(\cdot,t) \in \BV(\R^n)$ with $\norm{u(\cdot,t)}_{\TV(\R^n)} \leq \norm{u_0}_{\TV(\R^n)}$ for all $t > 0$.
%\begin{equation}
  %\norm{\omega(\cdot,t)}_{L^1(\R^)} \leq [u_0]_{\TV(\R)}.
%\end{equation}
Let $\psi \in C^\infty_0(\R^n)$ be non-negative and radial with $\psi \equiv 1$ in a neighborhood of the origin. Let $\psi(x,t) = \psi(x - xLt/|x|)$ when $|x| \geq Lt$ and $\psi \equiv 1$ otherwise.
Then, for all $x_0 \in \R^n$, all $t > 0$, and $k = 1,\hdots,n$,
\begin{equation}
\label{eq:alibaudformulaforbv}
\int_{\R^n} \psi(x-x_0) |\omega_k (x,t)| \, dx \leq \int_{\R^n} \psi(x-x_0,t) P (\cdot,t ) \ast |\omega_{k,0}|    \, dx,
\end{equation}
where $\omega_k = \p_k u$ and $\omega_{k,0} = \p_k \omega_0$ are finite measures.\footnote{This is why we require integration against continuous $\psi$ on the LHS.}
\end{corollary}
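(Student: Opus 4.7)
The plan is to deduce both parts of Corollary~\ref{cor:controlledbv} from Proposition~\ref{prop:Alibaud:formula} together with translation invariance of~\eqref{eq:criticalconservationlaw}: if $u$ is the entropy solution launched from $u_0$, then $u(\cdot + h e_k, \cdot)$ is the entropy solution launched from $u_0(\cdot + h e_k)$, for any $h \in \R$ and $k \in \{1,\dots,n\}$.

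For the global $\BV$ bound, I apply~\eqref{eq:alibaudformula} to this translated pair and divide by $|h|$:
\begin{equation}
\int_{B(x_0, R)} \frac{|u(x + he_k, t) - u(x,t)|}{|h|} \, dx \leq \int_{B(x_0, R + Lt)} P(\cdot, t) \ast \frac{|u_0(\cdot + h e_k) - u_0|}{|h|}(x) \, dx.
\end{equation}
Since $u_0 \in \BV(\R^n)$, the RHS is bounded uniformly in $h$ by $\norm{\p_k u_0}_{\mathcal{M}(\R^n)}$. As $h \downarrow 0$, the difference quotients on the LHS remain uniformly bounded in $L^1(B(x_0, R))$, so I can extract a weak-$\ast$ limit in the Radon measures, which must agree with the distribution $\omega_k(\cdot, t) = \p_k u(\cdot, t)$. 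Weak-$\ast$ lower semicontinuity of total variation converts the uniform RHS bound into $|\omega_k(\cdot,t)|(B(x_0, R)) \leq \int_{B(x_0, R+Lt)} P(\cdot, t) \ast |\omega_{k,0}|(x) \, dx$, and sending $R \to \infty$ together with summation in $k$ yields $u(\cdot, t) \in \BV(\R^n)$ with the claimed total-variation contraction.

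For the localized identity~\eqref{eq:alibaudformulaforbv}, the same translation argument applies once Proposition~\ref{prop:Alibaud:formula} has been upgraded from ball indicators to the smooth radial cutoff $\psi$. Taking $\psi$ radially nonincreasing (which is the case of interest), its super-level sets are balls $\{\psi > s\} = B(0, R_s)$. Using the layer-cake identity $\psi = \int_0^{\I} \mathbf{1}_{\{\psi > s\}} \, ds$, applying~\eqref{eq:alibaudformula} on each such level set, and interchanging the $s$-integral with the spatial integrals by Fubini yields the smooth-cutoff extension
\begin{equation}
\int \psi(x-x_0) |u(x,t) - \tilde u(x,t)| \, dx \leq \int \psi(x-x_0, t) \, P(\cdot, t) \ast |u_0 - \tilde u_0|(x) \, dx,
\end{equation}
because the outward shift $B(0, R_s) \mapsto B(0, R_s + Lt)$ appearing in~\eqref{eq:alibaudformula} is precisely the definition of $\psi(\cdot, t)$. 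Specializing to the pair $u, u(\cdot + he_k, \cdot)$, dividing by $|h|$, and passing $h \downarrow 0$ exactly as in the first part produces~\eqref{eq:alibaudformulaforbv}.

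The subtlest point throughout is the passage $h \downarrow 0$: because $u(\cdot, t)$ is a priori only in $L^\I(\R^n)$, the measure $\omega_k(\cdot, t)$ must be produced as a weak-$\ast$ limit of normalized translation differences, with the uniform Alibaud bound furnishing compactness in the space of Radon measures and lower semicontinuity of total variation delivering the final inequality. This also explains why the cutoff $\psi$ on the LHS of~\eqref{eq:alibaudformulaforbv} must be a continuous function, as highlighted in the statement's footnote: a continuous test function is exactly what can be paired against the limiting Radon measure $|\omega_k(\cdot,t)|$.
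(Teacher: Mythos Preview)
Your approach is essentially the paper's: both upgrade Alibaud's ball estimate to smooth radial weights via the layer-cake identity, then apply it to difference quotients and pass to the limit. The paper differs only in organizing the limit passage as a two-stage approximation (first $\nabla u_0 \in L^1$ compactly supported, where difference quotients converge \emph{strongly} in $L^1$; then general $\BV$ data, with approximants chosen so that $|\omega_{k,0}^{(i)}| \wstar |\omega_{k,0}|$), whereas you go directly to measure-valued $\omega_{k,0}$.

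There is one gap. In the first part you write that lower semicontinuity ``converts the uniform RHS bound into $|\omega_k(\cdot,t)|(B(x_0, R)) \leq \int_{B(x_0, R+Lt)} P(\cdot, t) \ast |\omega_{k,0}|\,dx$'', and in the second part you invoke ``exactly as in the first part''. But the only RHS control you actually established is the uniform bound $\norm{\p_k u_0}_{\mathcal M}$, not convergence to the specific localized quantity. To get the latter you need
\[
\frac{|u_0(\cdot+he_k)-u_0|}{|h|} \;\wstar\; |\omega_{k,0}| \quad \text{in } \mathcal M(\R^n),
\]
which is not automatic from $\nu_h \wstar \omega_{k,0}$ alone. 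It does hold here, because $|\nu_h|(\R^n) \leq |\omega_{k,0}|(\R^n)$ together with lower semicontinuity forces $|\nu_h|(\R^n) \to |\omega_{k,0}|(\R^n)$, and weak-$\ast$ convergence of signed measures with convergence of total masses implies weak-$\ast$ convergence of the total variations. You should state and use this; once you do, the convolution with the bounded continuous kernel $P(\cdot,t)$ and integration against $\psi(\cdot,t)$ pass to the limit, and your argument is complete. The paper's two-stage approximation is precisely a device to sidestep this point by arranging strong $L^1$ convergence at the smooth stage and building the $|\cdot|$-convergence into the choice of approximants at the second stage.
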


 By approximation, if also $\nabla u(\cdot,t) \in L^1(\R^n)$ for a given $t > 0$, then $\psi = \mathbf{1}_{B_R}$ with $R > 0$ is an admissible weight function.

\begin{proof}
The global $\BV$ bound is directly from \cite[Proposition~3.4]{AlibaudEntropy2007}. Let us justify~\eqref{eq:alibaudformulaforbv} with $x_0 = 0$ when $\nabla u_0 \in L^1(\R^n)$ is compactly supported. %\footnote{We do not yet assert that $\nabla u(\cdot,t) \in L^1(\R^n)$ when $t > 0$.}
First, Alibaud's formula~\eqref{eq:alibaudformula} holds with integration against weight $\psi$ as in~\eqref{eq:alibaudformulaforbv}. This is shown by integrating~\eqref{eq:alibaudformula} according to the principle $\int_{\R^n} \psi F \, dx = \int_0^\infty \int_{ \{ \psi > \lambda \} } F \, dx \, d\lambda$. Let $D_k^\varepsilon$ be the different quotient operator $D_k^\varepsilon u = (u(x) - u(x-\varepsilon \vec{e}_k))/\varepsilon$. Letting $\tilde{u} = u(\cdot-\varepsilon \vec{e}_k,t)$ in~\eqref{eq:alibaudformula} with weight $\psi$, and dividing by $\varepsilon$, we have
\begin{equation}
\int_{\R^n} \psi(x) | D_k^\varepsilon u(x,t) | \, dx \leq \int_{\R^n} \psi(x,t) P (\cdot,t ) \ast |D_k^\varepsilon u_0|    \, dx.
\end{equation}
We have $D_k^\varepsilon u_0 \to \omega_{k,0}$ strongly in $L^1(\R^n)$. Then $P(\cdot,t) \ast |D_k^\varepsilon u_0| \to P(\cdot,t) \ast |\omega_{k,0}|$ in $L^1(\R^n)$ also. This implies that the LHS remains  bounded as $\varepsilon \to 0^+$. Hence, $\nabla u(\cdot,t)$ actually belongs to $L^1(\R^n)$, and the LHS converges to $\int_{\R^n} \psi(x) |\omega_k(x,t)| \, dx$ as $\varepsilon \to 0^+$. To complete the proof for general $u_0 \in \BV(\R^n)$, we approximate $u_0$ in $L^\infty(\R^n)$ by $u^{(i)}_0$, $i \in \N$, and we approximate $\nabla u_0$ weakly-$\ast$ in $\mathcal{M}(\R^n)$ by $\nabla u_0^{(i)}$ compactly supported, belonging to $L^1(\R^n)$, and satisfying $|\omega_{k,0}^{(i)}| \wstar |\omega_{k,0}|$ in the sense of measures. Then one may verify, using the Lebesgue dominated convergence theorem and kernel estimates, that $P(\cdot,t) \ast |\omega_{k,0}^{(i)}| \to P(\cdot,t) \ast |\omega_{k,0}|$  strongly in $L^1(\R^n)$. The LHS is handled by lower semicontinuity. This completes the proof.
\end{proof}

Proposition~\ref{prop:Alibaud:formula} only allows us to propagate $L^1$-based quantities, which then smooth to $L^q$-based quantities, $q \geq 1$, after the initial time:

Let $\ell > 0$ and $\ell \, \square(k)$ be the open cube with center at $k$ and side length $\ell$. That is, $\ell \, \square(k) = k + (-\ell/2,\ell/2)^n$. We write $\square(k) = 1 \square(k)$. Define
\begin{equation}
  \norm{f}_{\ell^p_k L^q_x(\R^n)} = \left\lVert \left\lVert f \right\rVert_{L^q_x(\square(k))} \right\rVert_{\ell^p_k(\Z^n)}.%\sum_{\vec{k} \in \Z^n} \left( \int_{\square(\vec{k})} |f| \, dx \right)^p
\end{equation}
When $p=+\infty$, the space $\ell^\infty_k L^q_x(\R^n)$ is known in the literature as $L^q_\uloc(\R^n)$. We have $L^p(\R^n) = \ell^p_k L^p_x(\R^n)$ with equality of norms. We also have the obvious embeddings
\begin{equation}
  \label{eq:trivialembedding1}
  \norm{f}_{\ell^p_k L^{q_1}_x(\R^n)} \leq \norm{f}_{\ell^p_k L^{q_2}_x(\R^n)}
\end{equation}
when $q_1 \leq q_2$, and
\begin{equation}
  \norm{f}_{\ell^{p_2}_k L^q_x(\R^n)} \leq \norm{f}_{\ell^{p_1}_k L^q_x(\R^n)}
\end{equation}
when $p_1 \leq p_2$. The short-time and small-distance behavior of these spaces is akin to that of $L^q(\R^n)$, whereas the large-distance behavior is more closely akin to that of $L^p(\R^n)$.

We will require the following smoothing estimates when $q=1$ or $p=q$. However, it is no more effort to prove the general estimates:
\begin{lemma}[Smoothing for the heat equation]
\label{lem:smoothing}
Let $p,q_1,q_2 \in [1,+\infty]$ with $q_1 \leq q_2$. Let $w_0 \in \ell^p_k L^{q_1}_x(\R^n)$. Define
\begin{equation}
  w(\cdot,t) = P(\cdot,t) \ast w_0.
\end{equation}
Then for $t \leq 1$,
\begin{equation}
  \norm{w(\cdot,t)}_{\ell^p_k L^{q_2}_x(\R^n)} \les t^{\frac{n}{q_2} - \frac{n}{q_1}} \norm{w_0}_{\ell^p_k L^{q_1}_x(\R^n)}.
\end{equation}
\end{lemma}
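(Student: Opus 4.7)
The plan is to reduce the estimate to the classical $L^{q_1} \to L^{q_2}$ smoothing of the Poisson kernel on $\R^n$ by decomposing the initial data along the unit lattice. Setting $a_k := \norm{w_0}_{L^{q_1}(\square(k))}$ and writing $w_0 = \sum_k w_0 \mathbf{1}_{\square(k)}$, I would estimate $\norm{w(\cdot, t)}_{L^{q_2}(\square(j))}$ cube-by-cube, splitting the resulting sum over $k$ into indices close to and far from $j$.

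For nearby indices $|j - k| \leq C_0$ (some fixed multiple of $\sqrt{n}$), a direct application of Young's inequality with the scaling identity $\norm{P(\cdot,t)}_{L^r(\R^n)} \les t^{-n(1-1/r)}$ for $1/r = 1 - 1/q_1 + 1/q_2$ gives
\[
\norm{P(\cdot, t) \ast (w_0 \mathbf{1}_{\square(k)})}_{L^{q_2}(\square(j))} \les t^{n/q_2 - n/q_1} a_k.
\]
For far indices $|j - k| > C_0$, I would instead use the pointwise decay $P(x-y, t) \les t \, |j-k|^{-(n+1)}$, valid because $|x-y| \geq |j-k|/2 \geq t$ when $t \leq 1$, together with H\"older on $\square(k)$, to obtain
\[
\norm{P(\cdot, t) \ast (w_0 \mathbf{1}_{\square(k)})}_{L^{q_2}(\square(j))} \les \frac{t}{|j - k|^{n+1}} a_k.
\]

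Summing in $k$ expresses $\norm{w(\cdot, t)}_{L^{q_2}(\square(j))}$ as a sum of two discrete convolutions of $\{a_k\}$ against $\ell^1(\Z^n)$ sequences, so Young's inequality for series bounds the $\ell^p_j$-norm by $(t^{n/q_2 - n/q_1} + t)\norm{a_k}_{\ell^p}$. Since $q_1 \leq q_2$ implies $n/q_2 - n/q_1 \leq 0$, for $t \leq 1$ one has $t \leq 1 \leq t^{n/q_2 - n/q_1}$, so the first term dominates and the claimed estimate follows.

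The only real obstruction is the polynomial tail of the Poisson kernel, which prevents the naive "kernel concentrates on one cube" shortcut available for the Gaussian heat kernel. The near/far split coupled with the $\ell^1(\Z^n)$ integrability of $|k|^{-(n+1)}$ is the natural workaround; everything else is bookkeeping atop the global $L^{q_1} \to L^{q_2}$ smoothing bound for $P(\cdot, t)$.
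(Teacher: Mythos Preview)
Your proposal is correct and follows essentially the same route as the paper: a near/far decomposition of the lattice, Young's convolution inequality for the nearby cubes, the pointwise tail bound $P(z,t)\lesssim t|z|^{-(n+1)}$ for the far cubes, and then discrete Young's inequality in $\ell^p(\Z^n)$ against the summable sequence $|j|^{-(n+1)}$. The only cosmetic differences are that the paper drops the extra factor of $t$ in the far estimate immediately (using $t\leq 1$) and passes through the $L^1$ norm on unit cubes via the trivial embedding rather than keeping $L^{q_1}$ throughout.
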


\begin{proof}
Let $k \in \Z^n$. We decompose $\mathbb{R}^n$ into near-to-$k$ and far-from-$k$ regions:
\begin{equation}
\begin{aligned}
  &\norm{w(x,t)}_{L^{q_2}_x(\square(k))} \leq  \underbrace{\left\lVert \int_{y \in 3\square(k)} P(x-y,t) |w_0|(y) \, dy \right\rVert_{L^{q_2}_x(\square(k))}}_{=I_1(k)} \\
  &\quad + \underbrace{\left\lVert \int_{y \in \R^n \setminus 3\square(k)} P(x-y,t) |w_0|(y) \, dy \right\rVert_{L^{q_2}_x(\square(k))}}_{=I_2(k)}.
  \end{aligned}
\end{equation}
Eventually, we will sum in $\ell^p_k(\Z^n)$. First, we estimate $I_1(k)$:
\begin{equation}
  I_1(k) \leq \left\lVert \sum_{|j|_{\infty} \leq 1} P(\cdot,t) \ast (\mathbf{1}_{\square(k+j)} |w_0|) \right\rVert_{L^{q_2}_x(\R^n)} \les t^{\frac{n}{q_2}-\frac{n}{q_1}} \sum_{|j|_{\infty} \leq 1} \norm{w_0}_{L^{q_1}_x(\square(k+j))},
\end{equation}
by Young's convolution inequality.
We now sum in $\ell^p_k(\Z^n)$. By the triangle inequality, and since there are only a finite number of boxes (specifically, $3^n$) in the $j$ sum, we have
\begin{equation}
  \norm{I_1(k)}_{\ell^p_k(\Z^n)} \les t^{\frac{n}{q_2} - \frac{n}{q_1}} \norm{w_0}_{\ell^p_k L^{q_1}_x(\R^n)}.
\end{equation}
Now we estimate $I_2(k)$:
\begin{equation}
\begin{aligned}
    I_2(k) %&= \int_{y \in Q^c(k)} |v_0(y)| \int_{x \in \square(k)} P(t , x - y) dx dy \\
        &\leq \sum_{|j|_{\infty} > 1}  \int_{\square(k - j) } |w_0(y)| \, \norm{ P(x -y,t) }_{L^{q_2}_x(\square(k))} \, dy \\
        &\leq \sum_{|j|_{\infty} > 1} \left[ \sup_{x\in\square(k)} \sup_{y \in \square(k-j)} P(x-y,t) \right]  \int_{\square(k -j)} |w_0(y)| \, dy,
\end{aligned}
\end{equation}
where we used H{\"o}lder's inequality in $x$ and $|\square(k)|=1$.
When $x \in \square(k)$ and $y \in \square(k - j)$, we have $|x-y| \geq |j| - 1$. Recall that $P(z,t) \les t/|z|^{n + 1} \les 1/|z|^{n+1}$ for $t \leq 1$. Hence,
\begin{equation}
  \sup_{x\in\square(k)} \sup_{y \in \square(k-j)} P(x-y,t) \les \frac{1}{(|j|-1)^{n+1}},
\end{equation}
and
\begin{equation}
\label{eq:i2est2}
  I_2(k) \les \sum_{|j|_{\infty} > 1} \frac{1}{(|j|-1)^{n+1}} \int_{\square(k -j)} |w_0(y)| \, dy.
\end{equation}
One may recognize~\eqref{eq:i2est2} as a discrete convolution with a summable-in-$j$ kernel.
Applying $\norm{\cdot}_{\ell^p_k(\Z^n)}$, we have
\begin{equation}
  \label{eq:i2est3}
  \norm{I_2(k)}_{\ell^p_k(\Z^n)} \les \sum_{|j|_{\infty} > 1} \frac{1}{(|j|-1)^{n+1}} \norm{w_0}_{\ell^p_k L^1_x(\R^n)} \les \norm{w_0}_{\ell^p_k L^{q_1}_x(\R^n)},
\end{equation}
where we use the trivial embedding~\eqref{eq:trivialembedding1}.
This completes the proof.
\end{proof}

We now justify that the entropy solutions immediately become H{\"o}lder continuous and better:

\begin{proposition}[Regularity]
\label{pro:regularity}
Let $u$ be the unique entropy solution of~\eqref{eq:criticalconservationlaw} with initial data satisfying $\norm{u_0}_{L^\infty(\R^n)} \leq m$. Suppose also that $f \in C^\infty_\loc(\R;\R^n)$. There exists $\alpha = \alpha(m,n) \in (0,1)$ such that $u \in L^\infty_{t,\loc} C^{2,\alpha}_x(\R^n \times (0,+\infty))$ and
\begin{equation}
  \esssup_{t > 0} \, t \norm{\nabla u(\cdot,t)}_{L^\infty(\R^n)} + t^2 \norm{\nabla^2 u(\cdot,t)}_{L^\infty(\R^n)} + t^{2+\alpha} [\nabla^2 u(\cdot,t)]_{C^\alpha(\R^n)} \les_m 1.
\end{equation}
\end{proposition}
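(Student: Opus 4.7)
The plan is to combine Hölder regularity for critical drift-diffusion equations (due to Silvestre) with a bootstrap argument and the natural parabolic scaling symmetry. First, write~\eqref{eq:criticalconservationlaw} in drift-diffusion form as $\p_t u + b \cdot \nb u + \La u = 0$, where $b(x,t) := f'(u(x,t))$. The maximum principle $\|u\|_{L^\I} \leq m$ together with $f \in C^\I_\loc$ gives $\|b\|_{L^\I_{t,x}} \les_m 1$. Because the entropy solution is not a priori classical, work first with the vanishing viscosity approximations $u^\nu$ solving $\p_t u^\nu + \div f(u^\nu) + \La u^\nu - \nu \De u^\nu = 0$, which are smooth for $t > 0$, share the same $L^\I$ bound uniformly in $\nu > 0$, and converge to $u$ in $C([0,T];L^1_\loc(\R^n))$ by Alibaud's stability theory.

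Applied to $u^\nu$ viewed as a solution of the linear critical drift-diffusion equation with coefficient $b^\nu := f'(u^\nu)$, Silvestre's Hölder estimate~\cite{SilvestreHolderEstimatesAdvectionDiffusion2012} produces some $\al = \al(m,n) \in (0,1)$ and the bound $[u^\nu(\cdot,1)]_{C^\al(\R^n)} \les_m 1$, uniformly in $\nu$, as it depends only on $\|b^\nu\|_{L^\I}$ and $\|u^\nu\|_{L^\I}$. Passing to the limit $\nu \to 0^+$ via Arzelà--Ascoli together with the uniqueness of the entropy solution transfers the bound to $u(\cdot,1)$. Once $u \in L^\I_{t,\loc} C^\al_x$ on a time strip bounded away from zero, the drift $b = f'(u)$ is itself $C^\al_x$ with the same control, and Schauder-type estimates for the critical fractional parabolic operator $\p_t + b \cdot \nb + \La$ developed in~\cite{SilvestreHamiltonJacobi2011} upgrade the regularity to $C^{1,\al}$. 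A further iteration, in which $f'(u)$ is now $C^{1,\al}_x$, reaches $C^{2,\al}$.

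The quantitative $t$-dependence is then a consequence of the scaling symmetry~\eqref{eq:scalingsymmetry}: for fixed $t > 0$, $v(x,s) := u(tx,ts)$ is again an entropy solution of~\eqref{eq:criticalconservationlaw} with the same $L^\I$ bound. Applying the fixed-scale $C^{2,\al}$ estimate to $v$ at $s = 1$ yields
\begin{equation*}
  \|\nb v(\cdot,1)\|_{L^\I(\R^n)} + \|\nb^2 v(\cdot,1)\|_{L^\I(\R^n)} + [\nb^2 v(\cdot,1)]_{C^\al(\R^n)} \les_m 1,
\end{equation*}
which, in view of the identities $\nb^k v(\cdot,1) = t^k (\nb^k u)(t\,\cdot\,,t)$ and $[\nb^2 v(\cdot,1)]_{C^\al} = t^{2+\al}[\nb^2 u(\cdot,t)]_{C^\al}$, rearranges to the claimed inequalities. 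The main obstacle is the transfer of Silvestre's Hölder estimate from the viscous approximations to the entropy solution with constants uniform in $\nu$; the bootstrap via nonlocal Schauder estimates and the scaling rearrangement are then essentially mechanical.
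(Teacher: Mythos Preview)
Your proposal is correct and follows essentially the same strategy as the paper: Silvestre's $L^\infty \to C^\alpha$ estimate for critical drift-diffusion, then a Schauder bootstrap to $C^{2,\alpha}$, then the scaling symmetry for the $t$-weights. Two minor discrepancies are worth noting. First, the paper justifies the initial H\"older estimate not via vanishing viscosity but by mollifying the initial data and passing to the limit through classical solutions; the authors explicitly remark (in a footnote) that Alibaud's construction in~\cite{AlibaudEntropy2007} proceeds by operator splitting rather than by $\varepsilon\Delta$-regularization, so the convergence $u^\nu \to u$ you invoke is not literally ``Alibaud's stability theory'' and would require a separate (though standard) argument. Second, for the $C^\alpha \to C^{1,\alpha}$ Schauder step the paper cites~\cite{SilvestreLipschitz} rather than~\cite{SilvestreHamiltonJacobi2011}, and it makes the $C^{1,\alpha} \to C^{2,\alpha}$ step explicit by differentiating the equation and treating $-\p_k b \cdot \nabla u$ as a $C^\alpha$ forcing term, which is what your ``further iteration'' amounts to.
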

\begin{proof}
The estimate
\begin{equation}
  \label{eq:holderreg}
  t^{\alpha} [u(\cdot,t)]_{C^\alpha(\R^n)} \les_m 1
\end{equation} follows from a direct application of the $L^\infty_x \to C^\alpha_x$ smoothing estimates developed by Silvestre in~\cite[Theorem 1.1]{SilvestreHolderEstimatesAdvectionDiffusion2012} and~\cite{SilvestreHamiltonJacobi2011} for bounded `solutions' of non-local drift-diffusion equations
\begin{equation}
  \label{eq:driftdiffusionequation}
  \p_t u + b \cdot \nabla u + \Lambda u = g,
\end{equation}
where $b, g$ are bounded. Notably, \emph{$b$ may be large and not necessarily divergence free.}
In our situation, $b(x,t) = f'(u(x,t))$ and $g = 0$. The notion of `solution' is in quotation marks because, here, $b$ is allowed to be discontinuous, so the notion of viscosity solution may not be directly applicable.\footnote{This is discussed  in Section~5 of Silvestre's paper~\cite{SilvestreHolderEstimatesAdvectionDiffusion2012}, see also Section~3 of~\cite{SilvestreLipschitz}. Silvestre mentions that, if viscosity solutions are unavailable, then one may justify the estimates at the level of the vanishing viscosity approximation
\begin{equation}
  \label{eq:vanishingviscosity}
  \p_t u^\varepsilon + b^\varepsilon \cdot \nabla u^\varepsilon + \Lambda u^\varepsilon = \varepsilon \Delta u^\varepsilon
\end{equation}
with $\varepsilon \to 0^+$. In principle, this is possible. However, in our setting, the construction in~\cite{AlibaudEntropy2007} was by an operator splitting method, rather than regularization by $\varepsilon \Delta u^\varepsilon$, so we argue differently.} To employ Silvestre's estimates rigorously, one may mollify the initial data, argue at the level of classical solutions, and pass to the limit.

To bootstrap $C^\alpha_x \to C^{1,\alpha}_x$, we apply linear estimates due to Silvestre in~\cite[Theorem 1.1]{SilvestreLipschitz} for the drift-diffusion equation~\eqref{eq:driftdiffusionequation}. It is also possible to proceed more directly, as in Appendix B of~\cite{CaffVassAnnals2010} or in~\cite{ConstVicolGAFA,ConstTarfVicolCMP}. Since $b = f'(u)$ is $\alpha$-H{\"o}lder continuous in $\R^n \times (1/2,1)$ with bounds depending only on $m$, Theorem~1.1 in~\cite{SilvestreLipschitz} gives
\begin{equation}
  \norm{u}_{L^\infty_t C^{1,\alpha}_x(\R^n \times (1/2,1))} \les_m 1.
\end{equation}
Hence, $b = f'(u)$ satisfies the same bounds.
Next, we apply $\p_k$, $1 \leq k \leq n$, to the PDE. This gives
\begin{equation}
  \p_t \p_k u + \Lambda \p_k u + b \cdot \nabla \p_k u = - \p_k b \cdot \nabla u.
\end{equation}
We regard $g = -\p_k b \cdot \nabla u$ as a forcing term belonging to $L^\infty_t C^\alpha_x(\R^n \times (1/2,1))$ Finally, Theorem~1.1 in~\cite{SilvestreLipschitz} gives
\begin{equation}
  \norm{\p_k u}_{L^\infty_t C^{1,\alpha}_x(\R^n \times (3/4,1))} \les_m 1.
\end{equation}
Scaling invariance gives the sharp dependence on $t$. One could also proceed to higher derivatives.
\end{proof}

Consider the linear PDE
\begin{equation}
\label{eq:linearPDE}
  \partial_t u + \Lambda u + b \cdot \nabla u + cu = 0
\end{equation}
where $b \in L^\infty_t C^{1,\alpha}_x(Q_1)$ and $c \in L^\infty_t C^\alpha_x(Q_1)$ with $\norm{b}_{L^\infty_t C^{1,\alpha}_x(Q_1)} + \norm{c}_{L^\infty_t C^\alpha_x(Q_1)} \leq M$.  Here, $Q_T = \R^n \times (0,T)$.

%It is likely possible to consider only operators $\div (b \cdot)$ or $b \cdot \nabla$ with $b \in L^\infty_t C^\alpha_x$. We will not to do so here.

\begin{proposition}[Fundamental solution estimates]
\label{prop:fund:soln}
There exists a continuous function $\Gamma = \Gamma(x,t;y,s)$, $x,y \in \R^n$ and $0 \leq s < t \leq 1$, satisfying the following properties:
\begin{itemize}[leftmargin=*]
	\item (Pointwise upper and lower bounds) For all $0 \leq S \leq s < t \leq T \leq 1$,
\begin{equation}
	\label{eq:fundsolests}
	C_0^{-1} P(x,t;y,s) \leq \Gamma(x,t;y,s) \leq C_0 P(x,t;y,s),
\end{equation}
where $C_0 = C_0(T-S,M) > 0$ and $P$ is the Poisson kernel.
  \item (Maximum principle) If $c \equiv 0$, then $\int_{\R^n} P(x,t;y,s) \, dy = 1$.
%Moreover, $C_0(S,T,M) \to 1^+$ as $|S-T| \to 0^+$. 
%\item (Semigroup property)
%\begin{equation}
	%\Gamma(x,t;y,s) = \int_{\R^n} \Gamma(x,t;y_1,s_1) \Gamma(y_1,s_1;y,s) \, dy.
%\end{equation}
\item (Representation formula)
If $w \in L^\infty_{t,x}(Q_1)$, with $w \in L^\infty_{t,\loc} C^{1,\beta}_x(Q_1)$ for some $\beta \in (0,1)$, is a solution of~\eqref{eq:linearPDE} on $Q_1$ and $w(\cdot,t) \wstar w_0$ in $L^\infty(\R^n)$ as $t \to 0^+$, then
\begin{equation}
	w(x,t) = \int_{\R^n} \Gamma(x,t;y,0) w_0(y) \, dy.
\end{equation}
Solutions given by the representation formula belong to the above class.
\end{itemize}
%The above function is known as the \emph{fundamental solution} of the operator $\p_t + L$.
\end{proposition}

Proposition~\ref{prop:fund:soln} was obtained in the paper~\cite{XieZhangHeatKernel2014} of Xie and Zhang by E. E. Levi's parametrix method, \emph{except for uniqueness}, which we sketch below. In~\cite{XieZhangHeatKernel2014}, the authors work with more general assumptions: $b$ in the subcritical space $L^\infty_t C^\alpha_x(Q_1)$ and $c$ in a critical Kato space.

\begin{proof}[Proof of uniqueness]
Let $u_0 \in L^2(\R^n)$. Define 
\begin{equation}
  v(x,t) = \int_{\R^n} \Gamma(x,t;y,0) u_0(y) \, dy.
\end{equation}
Let $L = \Lambda + b \cdot \nabla + c$ and $L^* = \Lambda - b \cdot \nabla + (c - \div b)$.
Under our additional regularity assumptions, it is possible to show that $v$ is a weak solution\footnote{Due to the quite general conditions in~\cite{XieZhangHeatKernel2014}, the authors avoided classical solutions and space-time distributional solutions. Instead, they connect the fundamental solution to the PDE via the `generator' notion.} of the PDE in the sense that
\begin{equation}
  \iint v(x,t) (-\p_t + L^*) \varphi \, dx \, dt = 0
\end{equation}
for all $\varphi \in C^\infty_0(\R^n \times (0,1))$. Additionally, we have $v \in L^\infty_t L^2_x(Q_1)$ and $v \in L^2_{t,\loc} H^{1/2}_x(\R^n \times (0,1])$, among many other spaces, and $v(\cdot,t) \to u_0$ in $L^2(\R^n)$ as $t \to 0^+$. This follows from the pointwise upper bounds of the fundamental solution and its first derivatives (see~\cite[Theorem 1.1 (v)]{XieZhangHeatKernel2014}) and the convergence result in~\cite[Theorem 1.1 (ii)]{XieZhangHeatKernel2014}. One may show, via energy estimates, that the above solution is unique in its class and, additionally, belongs to $C([0,1];L^2(\R^n)) \cap L^2_t H^{1/2}_x(Q_1)$.\footnote{It is important for the energy estimates that $b \in L^\infty_t C^{1/2}_x(Q_1)$.}

Assume now that $u_0 \in L^1 \cap L^\infty(\R^n)$. Then the above solution $v$ also belongs to $L^\infty_t L^1_x \cap L^\infty_{t,x}(Q_1)$. By uniqueness within the energy class, the solution $v$ may be obtained by vanishing viscosity:
\begin{equation}
  \p_t u^\varepsilon + \Lambda u^\varepsilon + b \cdot \nabla u^\varepsilon + c u^\varepsilon = \varepsilon \Delta u^{\varepsilon}.
\end{equation}
According to Silvestre's estimates, we have that $v \in L^\infty_{t,\loc} C^{1,\alpha}_x(\R^n \times (0,1])$ for some $\alpha \in (0,1)$ with estimates depending only on $\norm{u_0}_{L^\infty(\R^n)}$ and the coefficients. By approximation, we have that when $u_0 \in L^\infty(\R^n)$, $v$ satisfies the same \emph{a priori} estimates.

We now demonstrate the following uniqueness theorem by duality: \emph{If $u \in L^\infty_{t,\loc} C^{1,\alpha}_x(\R^n \times (0,1])$ is a solution of the linear PDE~\eqref{eq:linearPDE} with $u(\cdot,t) \wstar 0$ in $L^\infty(\R^n)$ as $t \to 0^+$, then $u \equiv 0$.}\footnote{This argument is modeled off a similar argument in~\cite{albritton2020non} by the first author and Zachary Bradshaw.}

Let $T \in (0,1)$ and $\psi_0 \in L^1 \cap L^\infty(\R^n)$. The above analysis demonstrated that there exists $\psi \in L^\infty_t L^1_x \cap L^\infty_{t,x}(Q_T)$ with $\psi \in L^\infty_{t,\loc} C^{1,\alpha}_x(\R^n \times [0,T))$ and satisfying the adjoint problem
\begin{equation}
  -\p_t \psi + L^* \psi = 0
\end{equation}
with $\psi(T) = \psi_0$. Let $0 < t_0 < t_1 < T$ and $R, \varepsilon > 0$.
Let $\chi \in C^\infty_0(B_2)$ with $\chi \equiv 1$ on $B_1$ and $\chi_R = \chi(x/R)$. Let $\varphi^{t_0,t_1}_\varepsilon$ be a mollification of the indicator function $\mathbf{1}_{(t_0,t_1)}$ at scale $\varepsilon \ll 1$. We test~\eqref{eq:linearPDE} against $\psi \chi_R \varphi^{t_0,t_1}_\varepsilon$ and omit $t_0,t_1,R,\varepsilon$ from the notation as convenient:
\begin{equation}
\begin{aligned}
  &\iint \underbrace{\p_t u + L u}_{= 0 } \psi \chi \varphi \, dx \, dt = \iint \underbrace{-\p_t \psi + L^* \psi}_{= 0} u \varphi \, dx \, dt \\
  &\quad + \iint (-\p_t \varphi) \chi u \psi + \varphi (- b \cdot \nabla \chi) u \psi +  \varphi u [\Lambda, \chi] \psi \, dx \, dt.
  \end{aligned}
\end{equation}
Upon sending $\varepsilon \to 0^+$, we have
\begin{equation}
  \label{eq:intermediateeq}
  \int \chi_R u(x,t_1) \psi(x,t_1) \, dx - \int \chi_R u(x,t_0) \psi(x,t_0) \, dx = \int_{t_0}^{t_1} \int_{\R^n}  b \cdot \nabla \chi_R u \psi +  u [\Lambda, \chi_R] \psi \, dx \, dt
\end{equation}
for a.e. $t_0, t_1 \in (0,1)$. Moreover,~\eqref{eq:intermediateeq} is valid for all $t_0,t_1 \in [0,T]$, since $u \: [0,1] \to L^\infty(\R^n)$ is weak-$\ast$ continuous and $\psi \in C([0,T];L^2(\R^n))$. We focus on $t_0 = 0$ and $t_1 = T$. First, we recall the following estimate for the Calder{\'o}n commutator: $\| [\Lambda,\chi_R] \psi \|_{L^{p'}(\R^n)} \les_p R^{-1} \| \psi \|_{L^{p'}(\R^n)}$ for all $p \in (1,+\infty)$. Additionally, for $|x| \geq 2R$, we have
\begin{equation}
	|[\Lambda,\chi_R] \psi(x,t)| = c_n \left| \int_{\R^n} \frac{\chi_R(y)}{|x-y|^{n+1}} \psi(y,t) \, dy \right| \les_p |x| ^{-(n+1)+n/p} \norm{\psi(\cdot,t)}_{L^{p'}(\R^n)}.
\end{equation}
Hence, $\int_{B_{2R}^c} |[\Lambda,\chi_R] \psi(x,t)| \, dx \les R^{-1+n/p} \norm{\psi(\cdot,t)}_{L^{p'}(\R^n)}$.
By H{\"o}lder's inequality and the above two estimates on $[\Lambda,\chi_R] \psi$, we have
\begin{equation}
  \left| \int_0^T \int_{B_{2R} \cup B_{2R}^c} u [\Lambda, \chi_R] \psi \, dx \, dt \right| \les_p R^{-1+n/p} \norm{u}_{L^\infty_{t,x}(Q_1)} \norm{\psi}_{L^\infty_t L^{p'}_x(Q_1)} \to 0 \text{ as } R \to +\infty
\end{equation}
when $p > n$. The term containing $b \cdot \nabla \chi_R$ is $O(R^{-1})$, since $b, u \in L^\infty_{t,x}(Q_1)$ and $\psi \in L^\infty_t L^1_x(Q_T)$.
 Hence,~\eqref{eq:intermediateeq} becomes
\begin{equation}
  \int u(x,T) \psi_0 \, dx = 0,
\end{equation}
for all $T \in (0,1)$ and $\psi_0 \in L^1 \cap L^\infty(\R^n)$. Therefore, $u \equiv 0$ on $Q_1$.
\end{proof}

\section{Proof of main results}
\label{sec:proofs}

\subsection*{Proof of Theorem~\ref{thm:maintheorem}}

Let $u_0 \in L^\infty$ and $u$ be the corresponding entropy solution. For each $v_0$, we consider the solution $\tilde{u} = u + v$ with initial data $\tilde{u_0}  = u_0 + v_0 \in L^\infty$.

Let $m > 0$ and $\norm{u_0}_{L^\infty}, \norm{\tilde{u_0}}_{L^\infty} \leq m$.

We prove continuity with respect to $v_0$.

\begin{proposition}[Continuity estimate]
  \label{pro:continuityestimate}
Let $1 \leq p \leq q \leq +\infty$. If $v_0 \in L^p(\R^n)$, we have% that $v \in C((0,+\infty);L^q(\R^n))$ and
\begin{equation}
  \label{eq:continuityestimate}
	\norm{v(\cdot,t)}_{L^q(\R^n)} \les_{m,p,q} t^{\frac{n}{q}-\frac{n}{p}} \norm{v_0}_{L^p(\R^n)}.
\end{equation}
\end{proposition}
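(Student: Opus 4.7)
The difference $v = \tilde u - u$ satisfies the linear viscous continuity equation
\begin{equation*}
  \partial_t v + g \cdot \nabla v + (\div g) v + \Lambda v = 0, \qquad g(x,t) := \int_0^1 f'(u + \theta v) \, d\theta \in L^\infty_{t,x},
\end{equation*}
whose essential difficulty is that $g$ is merely bounded at the initial time, since $u_0, \tilde u_0$ are only $L^\infty$. The plan is to propagate a Wiener amalgam quantity of $v$ from $t=0$ to $t=1/2$ using the controlled speed of propagation (Proposition~\ref{prop:Alibaud:formula}), and then smooth it from $t=1/2$ to $t=1$ using the fundamental solution (Proposition~\ref{prop:fund:soln}) together with Lemma~\ref{lem:smoothing}. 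By the scaling symmetry $v(x,t) \mapsto v(\lambda x, \lambda t)$ of the equation, it suffices to prove the estimate at $t=1$; the diffusive factor $t^{n/q - n/p}$ will reappear automatically upon undoing the scaling at the end.

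For the propagation step, Proposition~\ref{prop:Alibaud:formula} applied on the cube $\square(k) \subset B(k,\sqrt{n}/2)$ yields
\begin{equation*}
  \int_{\square(k)} |v(x, 1/2)| \, dx \leq \int_{B(k, R_0)} P(\cdot, 1/2) \ast |v_0| \, dy,
\end{equation*}
for some $R_0 = R_0(n, m)$. H\"older's inequality in $y$ bounds the right-hand side by a constant multiple of $\norm{P(\cdot,1/2) \ast |v_0|}_{L^p(B(k, R_0))}$. Raising to the $p$-th power, summing in $k \in \Z^n$ (using the uniformly bounded overlap of $\{B(k, R_0)\}_{k \in \Z^n}$), and invoking Young's convolution inequality, I obtain
\begin{equation*}
  \norm{v(\cdot, 1/2)}_{\ell^p_k L^1_x(\R^n)} \lesssim_{m,n,p} \norm{v_0}_{L^p(\R^n)}.
\end{equation*}

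For the smoothing step, Proposition~\ref{pro:regularity} places $u, \tilde u \in L^\infty_t C^{2,\alpha}_x(\R^n \times (1/4,1))$ with bounds depending only on $m$, whence $g \in L^\infty_t C^{1,\alpha}_x$ and $\div g \in L^\infty_t C^{\alpha}_x$ on the same slab. After shifting and rescaling the time interval $(1/2,1)$ to $Q_1$, Proposition~\ref{prop:fund:soln} produces a fundamental solution $\Gamma$ enjoying the Poisson upper bound $|\Gamma(x, 1; y, 1/2)| \lesssim P(x - y, 1/2)$; the representation formula applies to $v$ (whose interior regularity on $(1/2,1)$ is again supplied by Proposition~\ref{pro:regularity}), giving $|v(\cdot,1)| \lesssim P(\cdot, 1/2) \ast |v(\cdot, 1/2)|$. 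Applying Lemma~\ref{lem:smoothing} with $(q_1, q_2) = (1, q)$, followed by the trivial embedding $\ell^p_k \hookrightarrow \ell^q_k$ valid for $p \leq q$, yields
\begin{equation*}
  \norm{v(\cdot,1)}_{L^q(\R^n)} \leq \norm{v(\cdot,1)}_{\ell^p_k L^q_x(\R^n)} \lesssim \norm{v(\cdot,1/2)}_{\ell^p_k L^1_x(\R^n)} \lesssim \norm{v_0}_{L^p(\R^n)},
\end{equation*}
and the scaling argument converts this into the sought estimate at general $t$.

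The main obstacle will be making the deployment of Proposition~\ref{prop:fund:soln} on the shifted interval $(1/2,1)$ fully rigorous: one must confirm the H\"older regularity of $g$ and $\div g$ in terms of that of $u, \tilde u$, keep track of the coefficient norms under the time rescaling, and check that $v$ itself belongs to $L^\infty_{t,\loc} C^{1,\beta}_x$ on $(1/2,1)$ as required for the representation formula. Each of these reduces to a direct invocation of Proposition~\ref{pro:regularity}, but they are the most delicate part of the argument. Conceptually, the intermediate Wiener amalgam norm is forced upon us because Proposition~\ref{prop:Alibaud:formula} only propagates $L^1$-type quantities locally, so the $\ell^p_k L^1_x$ scale serves as the natural bridge between the $L^p$ initial datum and the $L^q$ target.
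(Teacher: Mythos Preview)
Your proof is correct and follows essentially the same two-step scheme as the paper: propagate the $\ell^p_k L^1_x$ quantity from $t=0$ to $t=1/2$ via Alibaud's controlled speed of propagation, then smooth to $L^q$ on $(1/2,1)$ using the fundamental solution bounds of Proposition~\ref{prop:fund:soln} together with Lemma~\ref{lem:smoothing}. The only cosmetic difference is that in the propagation step you pass directly from $\int_{B(k,R_0)} P\ast|v_0|$ to $\|v_0\|_{L^p}$ via H\"older, bounded overlap, and Young's inequality, whereas the paper covers $B(k,R_0)$ by finitely many cubes and invokes Lemma~\ref{lem:smoothing} with $q_1=q_2=1$ to land in $\|v_0\|_{\ell^p_k L^1_x}$ first; both routes are equivalent.
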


\begin{proof}[Proof of Proposition~\ref{pro:continuityestimate}]
By scaling invariance, it is enough to demonstrate~\eqref{eq:continuityestimate} with $t=1$.

\textit{Step 1. Propagation of localization}.
First, we demonstrate that, for all $t \in (0,1/2]$, we have
\begin{equation}
  \label{eq:propoflocal}
  \norm{v(\cdot,t)}_{\ell^p_k L^1_x(\R^n)} \les_{m,p} \norm{v_0}_{\ell^p_k L^1_x(\R^n)}.
\end{equation}
Using Proposition~\ref{prop:Alibaud:formula} (Controlled speed of propagation), we have 
\begin{equation}
  \label{eq:initialcalculation}
\begin{aligned}
    \int_{\square(k)} |v(t,x)| dx &\leq \int_{B( k, \sqrt{2n}/2  )} |v(x,t)| dx \\
    &\leq  \int_{B( k, \sqrt{2n}/2 + Lt)} P \ast |v_0|(x) \,dx\\
    &= \sum_{|j| \leq R} \int_{\square(k+j)} P \ast |v_0|(x) \,dx
\end{aligned}
\end{equation}
where $j \in \Z^n$ and $R = R(n,L) > 0$. We apply $\norm{\cdot}_{\ell^p_k(\Z^n)}$ to each side of~\eqref{eq:initialcalculation}. By the triangle inequality, and since the sum in $j$ has only finitely many boxes, we have
\begin{equation}
  \label{eq:reducetoheat}
    \norm{v(\cdot,t)}_{\ell^p_k L^1_x(\R^n)} \les_{R} \norm{P \ast |v_0|}_{\ell^p_k L^1_x(\R^n)}.
\end{equation}
Now Lemma~\ref{lem:smoothing} (Smoothing for the heat equation) with $q_1=q_2=1$ gives~\eqref{eq:propoflocal}.

\textit{Step 2. Smoothing}.
Second, we demonstrate that, for all $t \in (3/4,1]$, we have %$v \in C((1/2,1];L^q(\R^n))$ and
\begin{equation}
  \label{eq:muhsmoothing}
  \norm{v(\cdot,t)}_{L^q(\R^n)} \les_{m,p,q} \norm{v(\cdot,1/2)}_{\ell^p_k L^1_x(\R^n)}.
  \end{equation}
By Proposition~\ref{pro:regularity} (Regularity), $u$ and $\tilde{u}$ belong to $L^\infty_t C^{2,\alpha}_x(\R^n \times (1/2,1))$ with bounds depending only on $m$. Hence, $v = \tilde{u} - u$ belongs to the same space. Let $w(\cdot,t) = v(\cdot,t+1/2)$ when $t \in (0,1/2]$. Let $w_0 = v(\cdot,1/2)$. Then
\begin{equation}
  \p_t w + \div (g(x,t) w) + \Lambda w = 0,
\end{equation}
where
\begin{equation}
  g(x,t) = \frac{f(\tilde{u}) - f(u)}{\tilde{u} - u}
\end{equation}
and
\begin{equation}
  \norm{g}_{L^\infty_t C^{1,\alpha}_x(\R^n \times (0,1/2))} \les_m 1.
\end{equation}
Therefore, we may use the representation formula from Proposition~\ref{prop:fund:soln} (Fundamental solution estimates):
\begin{equation}
w(x,t) = \int_{\mathbb{R}^n } \Gamma(x, t; y,0) w_0(y) \, dy.
\end{equation}
In particular, the pointwise upper bound in Proposition~\ref{prop:fund:soln} gives
\begin{equation}
  |w(x,t)| \les_m \int_{\mathbb{R}^n } P(x-y,t) |w_0|(y) \, dy.
\end{equation}
Then Lemma~\ref{lem:smoothing} (Smoothing for the heat equation) with $q_1 = 1$ and $q_2 = q$, along with the embedding $\ell^p_k L^q_x(\R^n) \into L^q(\R^n)$, gives~\eqref{eq:muhsmoothing}.

Finally, we combine~\eqref{eq:propoflocal} and~\eqref{eq:muhsmoothing} to complete the proof.
\end{proof}

When $v_0 \in L^1(\R^n)$, the propagation of localization comes `for free' from the $L^1$-contraction property, which was shown in~\cite{AlibaudEntropy2007}.

\begin{comment}
%\begin{corollary}
%If additionally $v_0 \in L^1(\R^n)$ with $\int v_0 \, dx = 0$, then
%\begin{equation}
 % \label{eq:l1contraction1}
  %\norm{v(\cdot,t)}_{L^1(\R^n)} \leq o_{t \to +\infty}(1) \norm{v_0}_{L^1(\R^n)},
%\end{equation}
%\begin{equation}
 % \label{eq:l1contraction2}
  %\norm{v(\cdot,t)}_{L^1(\R^n)} \to 0 \text{ as } t \to +\infty,
%\end{equation}
%and
%\begin{equation}
%\label{eq:theo1guy}
    %\norm{v(\cdot,t)}_{L^q(\R^n)} \les_{m,p,q} o_{t \to +\infty}(1) t^{\frac{n}{q}-n} \norm{v_0}_{L^1(\R^n)},
%\end{equation}
%where $o_{t \to +\infty}(1) \leq 1$ may depend on $v$.
%\end{corollary}
%\begin{proof}

%To justify~\eqref{eq:theo1guy}, we consider things starting from a later time. \blue{fill in}
%\end{proof}
\end{comment}

\begin{proof}[Proof of Theorem~\ref{thm:maintheorem}]
Our goal is to demonstrate the $o_{t \to +\infty}(1)$ improvement over the conclusion of Proposition~\ref{pro:continuityestimate} (Continuity estimate) when $p>1$. We approximate $v_0$ strongly in $L^p(\R^n)$ by $v_0^{(k)}$ belonging to $L^1 \cap L^\infty(\R^n)$ and satisfying the decay condition~\eqref{eq:decaycond}, $|v_0^{(k)}| \leq |v_0|$, and $\norm{v_0^{(k)}}_{L^\infty(\R^n)} \leq 2 m$. Let $v^{(k)}$ be the solution  corresponding to the initial data $v_0^{(k)}$. The $o_{t \to +\infty}(1)$ improvement is obvious for $v^{(k)}$, which satisfies a faster decay rate because its initial data belongs to $L^1(\R^n)$. Next, the triangle inequality and Proposition~\ref{pro:continuityestimate} yield
\begin{equation}
\begin{aligned}
  \norm{v(\cdot,t)}_{L^q(\R^n)} &\leq \norm{v^{(k)}(\cdot,t)}_{L^q(\R^n)} + \norm{v(\cdot,t) - v^{(k)}(\cdot,t)}_{L^q(\R^n)} \\
  &\les_m t^{\frac{n}{q} - \frac{n}{p}} o_{t \to +\infty}(1) \underbrace{\norm{v_0^{(k)}}_{L^p(\R^n)}}_{\leq \norm{v_0}_{L^p(\R^n)}} + t^{\frac{n}{q} - \frac{n}{p}} \underbrace{\norm{v_0 - v^{(k)}_0}_{L^p(\R^n)}}_{\to 0 \text{ as } k \to +\infty}.
  \end{aligned}
\end{equation}
This completes the proof.
\end{proof}

\begin{remark}
\label{rmk:alternative}
We record the following alternative proof, due to Hongjie Dong, of Step~1 in Proposition~\ref{pro:continuityestimate}, without the $\ell^p_k L^1_x$ spaces. Consider the adjoint problem to~\eqref{eq:viscouscontinuityeq},
\begin{equation}
	-\partial_t w - b \cdot \nabla w + \Lambda w = 0,
\end{equation}
where
\begin{equation}
	b(x,t) = \int_0^1 f'(\lambda \tilde{u} + (1-\lambda) u) \, d\lambda
\end{equation}
is H{\"o}lder continuous on $\R^n \times [1/2,1]$. Let $h = P(\cdot,1/2) \ast v_0$ and $x_0 \in \R^n$. Testing the PDE~\eqref{eq:viscouscontinuityeq} against the fundamental solution $\Gamma$ of the adjoint problem with pole at $(x_0,1)$, we have
\begin{equation}
	\label{eq:hongjiething}
	|v(x_0,1)| \overset{\eqref{eq:fundsolests}}{\les} \sum_{j=0}^{+\infty} 2^{-j} \barint_{B_{2^j}(x_0)} |v(x,1/2)| \, dx \overset{\eqref{eq:alibaudformula}}{\les} \sum_{j=0}^{+\infty} 2^{-j} \barint_{B_{2^j}(x_0)} |h| \, dx \les (Mh)(x_0),
\end{equation}
where $Mh$ is the maximal function of $h$. One can obtain the $L^p \to L^p$ bound and, more generally, weighted estimates, straightforwardly from~\eqref{eq:hongjiething}.
\end{remark}

\subsection*{$\BV$ convergence and spatial asymptotics}

\begin{proof}[Proof of Theorem~\ref{thm:bvconvergence}]
Let $t_k \to +\infty$ with $t_k \geq 1$. It will be convenient to work with the rescaled solutions
\begin{equation}
  u^{(k)}(y,s) = u(t_k y, t_k s),
\end{equation}
with $\omega^{(k)} = \p_y u^{(k)}$. Then
\begin{equation}
  \norm{\omega^{(k)}(\cdot,1) - \omega^{\SS}(\cdot,1)}_{L^1(\R)} = \norm{\omega(\cdot,t_k) - \omega^{\SS}(\cdot,t_k)}_{L^1(\R)}.
\end{equation}

By Proposition~\ref{pro:regularity} (Regularity) we can bootstrap the decay of $\| u^{(k)}(\cdot, 1) - u^{\SS}( \cdot,1)\|_{L^{\infty}(\R)}$  given by Theorem~ \ref{thm:maintheorem} to get

\begin{equation}
  \norm{\omega^{(k)}(\cdot,1) - \omega^{\SS}(\cdot,1)}_{L^{\infty}(B(R))} \to 0 \text{ as } k \to +\infty
\end{equation}
for all $R \geq 1$. Therefore, it suffices to show that there is no mass of $\omega^{(k)}$ escaping to infinity. Let $R \geq L+10$.
By Alibaud's $\BV$ formula~\eqref{eq:alibaudformulaforbv}, and covering $\R \setminus B(R)$ by an appropriate sequence of balls $B(x_0,1)$, we have
\begin{equation}
  \int_{\R \setminus B(R)} |\omega^{(k)}(x,1)| \, dx \les_m \int_{\R \setminus B(R-L)} P(\cdot,1) \ast |\omega_0^{(k)}| \, dx.
\end{equation}
It is not difficult to show that the quantity on the RHS is $o_{R \to +\infty}(1)$ uniformly in $k$.
\end{proof}

\begin{proof}[Proof of~\eqref{eq:spatialasymptotics}]
First, we remark that $u^{\SS}$ is monotone because the evolution of $\omega$ preserves its sign. This is true at the level of entropy solutions, as can be seen from their construction by the splitting argument in Alibaud's paper.

In the following, we allow the constant $C$ to depend on $u_0 = u_0^{\SS}$ and $\tilde{u}_0$. Let $a,b \in \R$ represent the limits of $u_0$ as $x \to \mp \infty$.

\textit{Step 1. Asymptotics for smooth approximation $\tilde{u}$}.
Let $\tilde{u}_0 \in C^\infty(\R)$ with $\tilde{u}_0 \equiv u_0$ outside of $B_1$. Let $\tilde{u}$ be the corresponding entropy solution, which may be shown to belong to $L^\infty_t C^{2,\alpha}_x(\R^n \times (0,1))$ by combining local-in-time  well-posedness\footnote{See the expository blog post~\cite{taonotes} of Tao on quasilinear well-posedness.} with the estimates in Proposition~\ref{pro:regularity} (Regularity).

%Under $C^3$ regularity of $f$, there exists $T = T(\norm{u_0}_{C^2(\R^n)}) > 0$ and a local-in-time solution $u$ belonging to $C_t \BUC^2(\R^n \times [0,T])$. For example, this is sketched in Tao's blog post. By the equation, one has $u \in C^1_t \BUC_x([0,T])$. In fact, our estimates will show that this solution is global.

By Proposition~\ref{prop:fund:soln} (Fundamental solution estimates), we have
\begin{equation}
  \tilde{u}(x,t) - u_0(x) = \int_{\R^n} P(x,t;y,0) [\tilde{u}_0(y) - u_0(x)] \, dy,
\end{equation}
since $\int P(x,t;y,0) \, dy = 1$ when $c \equiv 0$. Let $x \leq -1$. Hence,
\begin{equation}
  \tilde{u}(x,t) - u_0(x) = \underbrace{\int_{B_1} P(x,t;y,0) [\tilde{u}_0(y) - u_0(x)] \, dy}_{=I_1(x)} + \underbrace{(a-b) \int_{y \geq 1} P(x,t;y,0) \, dy}_{=I_2(x)}.
\end{equation}
Since $[\tilde{u}_0 - u_0(x)]\mathbf{1}_{B_1}$ is compactly supported, we have that $|I_1(x)| \les \la x \ra^{-2}$. On the other hand, when $a \neq b$, we have
\begin{equation}
  C^{-1} \la x \ra^{-1} \leq I_2(x)/(a-b) \leq C \la x \ra^{-1}.
\end{equation}
A similar argument holds for $x \geq 1$. The $I_2$ term will dominate when $|x| \geq C$.

\textit{Step 2. Faster decay for the difference $v$}.
Let $v = u^{\SS} - \tilde{u}$. We will exploit that $v_0 = v(\cdot,1)$ is supported in $B_1$ to demonstrate
\begin{equation}
  |v(x,1)| \leq C \la x \ra^{-2}.
\end{equation}
This will complete the proof, since the $I_2$ term will dominate $|v|$ when $|x| \geq C$.
We follow the scheme of propagation of localization and smoothing as in the proof of Proposition~\ref{pro:continuityestimate}. Let $k \in \Z$ with $|k| \geq 10$. By Alibaud's formula and the decay of the Poisson kernel, we have
\begin{equation}
    \int_{\square(k)} |v(\cdot,1/2)| \, dx \leq C \int_{B(k,\sqrt{2}/2+L)} P(\cdot,1/2) \ast |v_0| \, dx \leq C \la k \ra^{-2}.
\end{equation}
The difference $v$ also satisfies this estimate when $|k| < 10$.
Next, we consider $w(\cdot,t) = v(\cdot,t+1/2)$ and analyze its representation formula when $t \in (1/4,1/2]$:
\begin{equation}
\begin{aligned}
  |w(x,t)| \leq \int_{\R} \Gamma(x,t;y,0) |w_0| \, dy &\leq C \sum_{k \in \Z} \la k \ra^{-2} \norm{\Gamma(x,t;\cdot,0)}_{L^\infty_y(\square(k))} \\
  &\leq C \sum_{k \in \Z} \la k \ra^{-2} \la x - k \ra^{-2} \\
  &\leq C \la x \ra^{-2}.
  \end{aligned}
\end{equation}
The proof is complete.
\end{proof}

\begin{remark}[Rough $f$]
\label{rmk:lipschitzf}
Suppose that $f$ is locally Lipschitz and $n \geq 1$. It is possible to show that, for each $R > 0$, we have
\begin{equation}
  \label{eq:Lipconv}
  \norm{u - u^{\SS}}_{L^\infty(B(Rt))} \to 0 \text{ as } t \to +\infty.
\end{equation}
That is, $u$ converges to $u^{\SS}$ \emph{locally uniformly} in self-similar coordinates $y = x/t$, $s = \log (t/t_0)$ where $t_0 > 0$ is a reference time. Indeed, consider any sequence of rescaled solutions $u^{(k)}$ as above. Since $u_0^{(k)} \to u_0^{\SS}$ in $L^1_\uloc(\R^n)$, Alibaud's formula~\eqref{eq:alibaudformula} gives that $u^{(k)}(\cdot,1)$ converges in $L^1_\uloc(\R^n)$ to $u^{\SS}(\cdot,1)$. By the \emph{a priori} H{\"o}lder estimates~\eqref{eq:holderreg} and the Ascoli-Arzel{\'a} theorem,\footnote{To justify~\eqref{eq:holderreg} with Lipschitz $f$, one could mollify $f$ or apply a parabolic regularization $\varepsilon \Delta u^\varepsilon$, justify the estimates at the regularized level, and pass to the limit.} $u^{(k)}(\cdot,1)$ converges in $L^\infty_\loc(\R^n)$, and its limit must be $u^{\SS}(\cdot,1)$.

If $n = 1$ and $u_0 \in \BV(\R)$, then we may choose $R = +\infty$ in~\eqref{eq:Lipconv}, since the $\BV(\R)$ norm manages the behavior in $L^\infty(\R \setminus B_R)$ for $R \gg 1$ according to Alibaud's $\BV$ formula~\eqref{eq:alibaudformulaforbv}. If $f \in C^{1,\alpha}_\loc(\R)$, then it is possible to upgrade to $\BV(\R)$ convergence, since Silvestre's estimates in~\cite{SilvestreLipschitz} allow the solution to be bootstrapped from $C^\alpha_x(\R)$ to $C^{1,\alpha}_x(\R)$.
\end{remark}

\subsubsection{Acknowledgments}  DA thanks Vladim{\'i}r {\v S}ver{\'a}k for mentioning this problem and helpful discussions. We thank the referee for his/her work on the paper. We also thank Montie Avery, Vlad Vicol, and Jan Burczak for comments on a preliminary version, and Hongjie Dong for the alternative proof in Remark~\ref{rmk:alternative}. DA was supported by the NDSEG Fellowship and the NSF Postdoctoral Fellowship Grant No. 2002023. RB was supported by the NSF Graduate Fellowship Grant No. 1839302. 

\bibliographystyle{plain} %alpha
\bibliography{bibliography}

\begin{thebibliography}{10}

\bibitem{AsymptoticTraveling}
Franz Achleitner and Yoshihiro Ueda.
\newblock Asymptotic stability of traveling wave solutions for nonlocal viscous
  conservation laws with explicit decay rates.
\newblock {\em J. Evol. Equ.}, 18(2):923--946, 2018.

\bibitem{albritton2020non}
Dallas Albritton and Zachary Bradshaw.
\newblock Non-decaying solutions to the critical surface quasi-geostrophic
  equations with symmetries.
\newblock {\em arXiv preprint arXiv:2011.10856}, 2020.

\bibitem{AlibaudEntropy2007}
Natha\"{e}l Alibaud.
\newblock Entropy formulation for fractal conservation laws.
\newblock {\em J. Evol. Equ.}, 7(1):145--175, 2007.

\bibitem{AlibaudOccurrenceNonAppearance}
Natha\"{e}l Alibaud, J\'{e}r\^{o}me Droniou, and Julien Vovelle.
\newblock Occurrence and non-appearance of shocks in fractal {B}urgers
  equations.
\newblock {\em J. Hyperbolic Differ. Equ.}, 4(3):479--499, 2007.

\bibitem{AsymptoticPropertiesEntropySolutions}
Nathael Alibaud, Cyril Imbert, and Grzegorz Karch.
\newblock Asymptotic properties of entropy solutions to fractal {B}urgers
  equation.
\newblock {\em SIAM J. Math. Anal.}, 42(1):354--376, 2010.

\bibitem{BeckWayne}
Margaret Beck and C.~Eugene Wayne.
\newblock Using global invariant manifolds to understand metastability in the
  {B}urgers equation with small viscosity.
\newblock {\em SIAM J. Appl. Dyn. Syst.}, 8(3):1043--1065, 2009.

\bibitem{BilerFractalBurgers1998}
Piotr Biler, Tadahisa Funaki, and Wojbor~A. Woyczynski.
\newblock Fractal {B}urgers equations.
\newblock {\em J. Differential Equations}, 148(1):9--46, 1998.

\bibitem{bradshaw2020local}
Zachary Bradshaw and Tai-Peng Tsai.
\newblock Local energy solutions to the {N}avier-{S}tokes equations in {W}iener
  amalgam spaces.
\newblock {\em arXiv preprint arXiv:2008.09204}, 2020.

\bibitem{BricmontKupiainenLin}
J.~Bricmont, A.~Kupiainen, and G.~Lin.
\newblock Renormalization group and asymptotics of solutions of nonlinear
  parabolic equations.
\newblock {\em Comm. Pure Appl. Math.}, 47(6):893--922, 1994.

\bibitem{BurczakKellerSegel}
Jan Burczak and Rafael Granero-Belinch\'{o}n.
\newblock Critical {K}eller-{S}egel meets {B}urgers on {$\Bbb S^1$}: large-time
  smooth solutions.
\newblock {\em Nonlinearity}, 29(12):3810--3836, 2016.

\bibitem{CaffVassAnnals2010}
Luis~A. Caffarelli and Alexis Vasseur.
\newblock Drift diffusion equations with fractional diffusion and the
  quasi-geostrophic equation.
\newblock {\em Ann. of Math. (2)}, 171(3):1903--1930, 2010.

\bibitem{ChanCzubakBurgers2010}
Chi~Hin Chan and Magdalena Czubak.
\newblock Regularity of solutions for the critical {$N$}-dimensional {B}urgers'
  equation.
\newblock {\em Ann. Inst. H. Poincar\'{e} Anal. Non Lin\'{e}aire},
  27(2):471--501, 2010.

\bibitem{ChanCzubakSilvestreEventualRegularity2010}
Chi~Hin Chan, Magdalena Czubak, and Luis Silvestre.
\newblock Eventual regularization of the slightly supercritical fractional
  {B}urgers equation.
\newblock {\em Discrete Contin. Dyn. Syst.}, 27(2):847--861, 2010.

\bibitem{ChernLiu}
I-Liang Chern and Tai-Ping Liu.
\newblock Convergence to diffusion waves of solutions for viscous conservation
  laws.
\newblock {\em Comm. Math. Phys.}, 110(3):503--517, 1987.

\bibitem{ChmajTravelling}
Adam Chmaj.
\newblock Existence of travelling waves in the fractional {B}urgers equation.
\newblock {\em Bull. Aust. Math. Soc.}, 97(1):102--109, 2018.

\bibitem{ConstTarfVicolCMP}
Peter Constantin, Andrei Tarfulea, and Vlad Vicol.
\newblock Long time dynamics of forced critical {SQG}.
\newblock {\em Comm. Math. Phys.}, 335(1):93--141, 2015.

\bibitem{ConstVicolGAFA}
Peter Constantin and Vlad Vicol.
\newblock Nonlinear maximum principles for dissipative linear nonlocal
  operators and applications.
\newblock {\em Geom. Funct. Anal.}, 22(5):1289--1321, 2012.

\bibitem{DabkowskiEventualReg}
Michael Dabkowski.
\newblock Eventual regularity of the solutions to the supercritical dissipative
  quasi-geostrophic equation.
\newblock {\em Geom. Funct. Anal.}, 21(1):1--13, 2011.

\bibitem{Changhui}
Tam Do, Alexander Kiselev, Lenya Ryzhik, and Changhui Tan.
\newblock Global regularity for the fractional {E}uler alignment system.
\newblock {\em Arch. Ration. Mech. Anal.}, 228(1):1--37, 2018.

\bibitem{DongDuLiFiniteTimeFractal}
Hongjie Dong, Dapeng Du, and Dong Li.
\newblock Finite time singularities and global well-posedness for fractal
  {B}urgers equations.
\newblock {\em Indiana Univ. Math. J.}, 58(2):807--821, 2009.

\bibitem{GallayWayne}
Thierry Gallay and C.~Eugene Wayne.
\newblock Global stability of vortex solutions of the two-dimensional
  {N}avier-{S}tokes equation.
\newblock {\em Comm. Math. Phys.}, 255(1):97--129, 2005.

\bibitem{guillod2017numerical}
Julien Guillod and Vladim{\'\i}r {\v{S}}ver{\'a}k.
\newblock Numerical investigations of non-uniqueness for the {N}avier-{S}tokes
  initial value problem in borderline spaces.
\newblock {\em arXiv preprint arXiv:1704.00560}, 2017.

\bibitem{Oleinik}
A.~M. Il{\'i}n and O.~A. Ole{\u i}nik.
\newblock Behavior of solutions of the {C}auchy problem for certain quasilinear
  equations for unbounded increase of the time.
\newblock {\em Dokl. Akad. Nauk SSSR}, 120:25--28, 1958.

\bibitem{Iwabuchi1}
Tsukasa Iwabuchi.
\newblock Global solutions for the critical {B}urgers equation in the {B}esov
  spaces and the large time behavior.
\newblock {\em Ann. Inst. H. Poincar\'{e} Anal. Non Lin\'{e}aire},
  32(3):687--713, 2015.

\bibitem{Iwabuchi2}
Tsukasa Iwabuchi.
\newblock Analyticity and large time behavior for the {B}urgers equation and
  the quasi-geostrophic equation, the both with the critical dissipation.
\newblock {\em Ann. Inst. H. Poincar\'{e} Anal. Non Lin\'{e}aire},
  37(4):855--876, 2020.

\bibitem{JiaSverakIllposed}
Hao Jia and Vladim{\'\i}r {\v S}ver\'{a}k.
\newblock Are the incompressible 3d {N}avier-{S}tokes equations locally
  ill-posed in the natural energy space?
\newblock {\em J. Funct. Anal.}, 268(12):3734--3766, 2015.

\bibitem{Kapitula}
Christopher K. R.~T. Jones, Robert Gardner, and Todd Kapitula.
\newblock Stability of travelling waves for nonconvex scalar viscous
  conservation laws.
\newblock {\em Comm. Pure Appl. Math.}, 46(4):505--526, 1993.

\bibitem{KarchConvergenceRarefactionWaves}
Grzegorz Karch, Changxing Miao, and Xiaojing Xu.
\newblock On convergence of solutions of fractal {B}urgers equation toward
  rarefaction waves.
\newblock {\em SIAM J. Math. Anal.}, 39(5):1536--1549, 2008.

\bibitem{DiffusiveNWaves}
Yong~Jung Kim and Athanasios~E. Tzavaras.
\newblock Diffusive {$N$}-waves and metastability in the {B}urgers equation.
\newblock {\em SIAM J. Math. Anal.}, 33(3):607--633, 2001.

\bibitem{VariationsThemeCaffVass2009}
A.~Kiselev and F.~Nazarov.
\newblock A variation on a theme of {C}affarelli and {V}asseur.
\newblock {\em Zap. Nauchn. Sem. S.-Peterburg. Otdel. Mat. Inst. Steklov.
  (POMI)}, 370(Kraevye Zadachi Matematichesko\u{\i} Fiziki i Smezhnye Voprosy
  Teorii Funktsi\u{\i}. 40):58--72, 220, 2009.

\bibitem{KiselevNazarovVolbergInventiones2007}
A.~Kiselev, F.~Nazarov, and A.~Volberg.
\newblock Global well-posedness for the critical 2{D} dissipative
  quasi-geostrophic equation.
\newblock {\em Invent. Math.}, 167(3):445--453, 2007.

\bibitem{KiselevNonlocalMaximumPrinciples}
Alexander Kiselev.
\newblock Nonlocal maximum principles for active scalars.
\newblock {\em Adv. Math.}, 227(5):1806--1826, 2011.

\bibitem{KiselevFractalBurgers2008}
Alexander Kiselev, Fedor Nazarov, and Roman Shterenberg.
\newblock Blow up and regularity for fractal {B}urgers equation.
\newblock {\em Dyn. Partial Differ. Equ.}, 5(3):211--240, 2008.

\bibitem{MaekawaMiura-drift}
Yasunori Maekawa and Hideyuki Miura.
\newblock On fundamental solutions for non-local parabolic equations with
  divergence free drift.
\newblock {\em Adv. Math.}, 247:123--191, 2013.

\bibitem{Miller}
Joel~C. Miller and Andrew~J. Bernoff.
\newblock Rates on convergence to self-similar solutions of {B}urgers'
  equation.
\newblock {\em Stud. Appl. Math.}, 111(1):29--40, 2003.

\bibitem{Sattinger}
D.~H. Sattinger.
\newblock On the stability of waves of nonlinear parabolic systems.
\newblock {\em Advances in Math.}, 22(3):312--355, 1976.

\bibitem{SilvestreEventual}
Luis Silvestre.
\newblock Eventual regularization for the slightly supercritical
  quasi-geostrophic equation.
\newblock {\em Ann. Inst. H. Poincar\'{e} Anal. Non Lin\'{e}aire},
  27(2):693--704, 2010.

\bibitem{SilvestreHamiltonJacobi2011}
Luis Silvestre.
\newblock On the differentiability of the solution to the {H}amilton-{J}acobi
  equation with critical fractional diffusion.
\newblock {\em Adv. Math.}, 226(2):2020--2039, 2011.

\bibitem{SilvestreHolderEstimatesAdvectionDiffusion2012}
Luis Silvestre.
\newblock H\"{o}lder estimates for advection fractional-diffusion equations.
\newblock {\em Ann. Sc. Norm. Super. Pisa Cl. Sci. (5)}, 11(4):843--855, 2012.

\bibitem{SilvestreLipschitz}
Luis Silvestre.
\newblock On the differentiability of the solution to an equation with drift
  and fractional diffusion.
\newblock {\em Indiana Univ. Math. J.}, 61(2):557--584, 2012.

\bibitem{Stokols}
Logan~F. Stokols and Alexis~F. Vasseur.
\newblock H\"{o}lder regularity up to the boundary for critical {SQG} on
  bounded domains.
\newblock {\em Arch. Ration. Mech. Anal.}, 236(3):1543--1591, 2020.

\bibitem{taonotes}
Terence Tao.
\newblock Quasilinear well-posedness, 2010.
\newblock URL:
  https://terrytao.wordpress.com/2010/02/21/quasilinear-well-posedness/.
  Last visited on September 19, 2020.

\bibitem{XieZhangHeatKernel2014}
Longjie Xie and Xicheng Zhang.
\newblock Heat kernel estimates for critical fractional diffusion operators.
\newblock {\em Studia Math.}, 224(3):221--263, 2014.

\bibitem{ZumbrunHoward}
Kevin Zumbrun and Peter Howard.
\newblock Pointwise semigroup methods and stability of viscous shock waves.
\newblock {\em Indiana Univ. Math. J.}, 47(3):741--871, 1998.

\end{thebibliography}

\end{document}